 \newtheorem{Thm}{Theorem}[section]
 \newtheorem{Lem}[Thm]{Lemma}
 \newtheorem{Prop}[Thm]{Proposition}
 \newtheorem{Cor}[Thm]{Corollary}
\theoremstyle{remark}
 \newtheorem{Rem}[Thm]{Remark}
\theoremstyle{definition}
 \newtheorem{Def}[Thm]{Definition}
\numberwithin{equation}{section}
\newcommand\Z{\mathbb Z}
\newcommand\CC{\mathbb C}
\newcommand\ol[1]{\overline{#1}}
\newcommand\adhit{\triangleright}
\newcommand\inv{^{-1}}
\def\HM#1.#2.#3.#4.{{^{#1}_{#3}\mathcal M^{#2}_{#4}}}
\newcommand\id{\operatorname{id}}
\newcommand\ot{\otimes}
\newcommand\Stab{\operatorname{Stab}}
\newcommand\Vect{\operatorname{Vect}}
\newcommand\Tr{\operatorname{Tr}}
\newcommand\C{\mathcal C}
\newcommand\CTR{\mathcal Z}
\newcommand\RCTR{\ol{\mathcal Z}}
\newcommand\ota{\odot}
\newcommand\otb{\diamond}
\newcommand{\oub}[1]{\underset{{#1}}{\otb}}
\newcommand{\co}[1]{\underset{{#1}}{\Box}}
\newcommand\nt{\diamond}
\newcommand\VecGom{\Vect_G^{\omega}}
\newcommand\VecGHom{\Vect_{G/H}^\omega}
\newcommand\VecG{\Vect_G}
\newcommand\VecH{\Vect_H}
\newcommand\GTHC{{_{\CC[H]}(\VecGom)_{\CC[H]}}}
\newcommand\AGTHC{{_{\CC[H]}(\VecGHom)}}
\newcommand\coc{\alpha}
\newcommand\ord{\operatorname{ord}}
\newcommand\Ker{\operatorname{Ker}}
\renewcommand\epsilon\varepsilon
\begin{document}
\title[Indicator formula for group-theoretical categories]{A Higher Frobenius-Schur Indicator Formula for Group-Theoretical Fusion Categories}
\author{Peter Schauenburg}
\address{Institut de Math{\'e}matiques de Bourgogne --- UMR 5584 du CNRS\\
Universit{\'e} de Bourgogne\\
Facult{\'e} des Sciences Mirande\\
9 avenue Alain Savary\\
BP 47870 21078 Dijon Cedex\\
France
}
\email{peter.schauenburg@u-bourgogne.fr}
\subjclass[2010]{18D10,16T05,20C15}
\keywords{Fusion category, Frobenius-Schur indicator}
\thanks{Research partially supported through a FABER Grant by the \emph{Conseil régional de Bourgogne}}
\begin{abstract}
  Group-theoretical fusion categories are defined by data concerning finite groups and their cohomology: A finite group $G$ endowed with a three-cocycle $\omega$, and a subgroup $H\subset G$ endowed with a two-cochain whose coboundary is the restriction of $\omega$.

The objects of the category are $G$-graded vector spaces with suitably twisted $H$-actions; the associativity of tensor products is controlled by $\omega$. Simple objects are parametrized in terms of projective representations of finite groups, namely of the stabilizers in $H$ of right $H$-cosets in $G$, with respect to  two-cocycles defined by the initial data.

We derive and study general formulas that express the higher Frobenius-Schur indicators of simple objects in a group-theoretical fusion category in terms of the group-theoretical and cohomological data defining the category and describing its simples.
\end{abstract}
\maketitle

\section{Introduction}
\label{sec:introduction}

A group-theoretical fusion category $\C(G,H,\omega,\psi)$ is defined in terms of a finite group $G$, a subgroup $H$, a $\CC^\times$-valued three-cocycle $\omega$ on $G$, and a two-cochain $\psi$ on $H$ whose coboundary is the restriction of $\omega$ to $H$. The category $\C(G,H,\omega,\psi)$ can be viewed as the category of bimodules in the category $\VecGom$ over the twisted group algebra $\CC_\psi[H]$, where $\VecGom$ is the category of $G$-graded vector spaces with associativity isomosphism given by the cocycle $\omega$. Group-theoretical fusion categories are a rather accessible class of fusion categories, introduced and studied by Ostrik \cite{MR1976233}, and given their name in the paper \cite{MR2183279}, which we also use as a general reference for the theory of fusion categories. As group-theoretical fusion categories are sometimes considered as the somewhat trivial case in the classification program for general fusion categories, it almost seems in order to recall that they form a large class with objects of rich and varied structure. They include Drinfeld doubles of finite groups, the twisted doubles of \cite{MR1128130}, and in fact so many examples of (module categories of) semisimple Hopf algebras that the question whether all semisimple complex Hopf algebras might be group-theoretical was open for some years before being answered negatively in \cite{MR2480712}.

Higher Frobenius-Schur indicators are invariants of an object in a pivotal fusion category (and hence also invariants of that category). They generalize, to higher degrees and more general objects, the degree two Frobenius-Schur indicator defined for a representation of a finite group by its namesakes in 1906. Categorical versions of degree two indicators were studied by Bantay \cite{MR1436801} and Fuchs-Ganchev-Szlachányi-Vecsernyés \cite{MR1657800}, indicators for modules over semisimple Hopf algebras were introduced by Linchenko-Montgomery \cite{MR1808131} and studied in depth by Kashina-Sommerhäuser-Zhu \cite{MR2213320}. The degree two indicators for modules over semisimple quasi-Hopf algebras were treated by Mason-Ng \cite{MR2104908}. Higher indicators for pivotal fusion categories were introduced in \cite{NgSch:CIHISQHA,NgSch:HFSIPC,NgSch:FSIESC}.

Higher Frobenius-Schur indicators are a useful invariant in the theory of fusion categories and (quasi)Hopf algebras, but they are not usually easy to calculate in specific examples. In \cite{Sch:CHFSIFCCIFG} we have established a general formula for the higher Frobenius-Schur indicators of simple objects in the fusion categories $\C(G,H,1,1)$, that is, for the case of fusion categories ``without cocycles''. We refer to the literature cited there for the many predecessors and models for such a formula studied for special cases. The main result of the present paper continues the work in \cite{Sch:CHFSIFCCIFG} by establishing a general formula valid for the simple objects in a general group-theoretical fusion category. The formula in \cite{Sch:CHFSIFCCIFG} contains a previously studied formula for the indicators in the Drinfeld center of a finite group as a special case, and similarly the general formula obtained in the present paper contains a (seemingly new) formula for twisted Drinfeld centers.

Simple objects of the fusion category $\C(G,H,1,1)$ are described by irreducible characters of the stabilizers of the right cosets of $H$ in $G$ under the action of $H$ on these cosets. The indicator formula in \cite{Sch:CHFSIFCCIFG} expresses the higher indicators in terms of these irreducible characters, and the combinatorics of the subgroups and cosets involved. Simple objects of the general fusion category $\C(G,H,\omega,\psi)$ are described by irreducible \emph{projective} characters of the same stabilizer subgroups, with respect to a cocycle determined by the cohomological data $\omega$ and $\psi$. The new indicator formulas duly replace characters by projective characters, but certain additional correction terms involving the cohomological data arise; they can roughly be attributed to the fact that in a projective representation the action of a power of a group element differs from the corresponding power of the action of the group element.

To obtain the indicator formulas we use the generalization obtained in \cite{NgSch:FSIESC} of the ``third formula'' of \cite{MR2213320}: The indicators of an object $M$ in a pivotal fusion category $\C$ can be computed as the trace of the powers of the ribbon structure on $K(M)$, where $K\colon \C\to \CTR(\C)$ is the adjoint to the underlying functor from the Drinfeld center of $\C$. This is particularly feasible for group-theoretical categories since the Drinfeld center of the bimodule category over an algebra in a monoidal category is often insensitive to the algebra and only ``sees'' the underlying category \cite{MR1822847}. This is true in particular in the case of $\C(G,H,\omega,\psi)$, which is a bimodule category in $\VecGom$; thus its Drinfeld center is the center of $\VecGom$, i.e.\ the module category of a Drinfeld double. This can be expressed as saying that a group-theoretical category $\C(G,H,\omega,\psi$ is Morita equivalent to $\VecGom$ (see the survey \cite{MR3077244} for the notion of Morita equivalence).

This general approach was already taken in \cite{Sch:CHFSIFCCIFG}; the main difficulty in extending the results to general group-theoretical categories lies, naturally, in dealing with cohomological data and conditions, and projective characters. Instead of trying to generalize the description of the adjoint functors in \cite{Sch:CHFSIFCCIFG} by restriction, induction, and twisting of the group characters that describe the simple objects both of the group-theoretical category and its center, we use a rather different technique to hide cohomological complications rather painlessly behind categorical machinery. All the categories we deal with can be expressed as categories of modules, over an algebra in a monoidal category $\mathcal A$, say, where the modules are taken to live in a different category, say $\mathcal B$, on which $\mathcal A$ acts. Under this translation, the forgetful functor from the Drinfeld center to the group-theoretical category splits in two rather natural-looking steps (restricting a module structure to a smaller algebra, and coarsening a grading), and its adjoint can be described similarly. A key trick we use along the way is a significant simplification due to Natale \cite{MR2196640} of the cohomological data defining a group-theoretical fusion category. She shows that any group-theoretical fusion category is equivalent to one of the form $\C(G,H,\omega,1)$ such that not only $\omega|_{H\times H\times H}=1$ (as follows from $\psi=1$), but already $\omega_{G\times G\times H}=1$ (and further conditions that we do not need). Thus every group-theoretical fusion category falls in the (therefore larger) class of categories studied in \cite{MR1887584}, and the quasi-Hopf algebras explicitly representing these categories constructed in \cite{MR1887584} are used in \cite{MR2196640} to find formulas for degree two Frobenius-Schur indicators. In the present paper, Natale's simplification is essential to bring the Frobenius-Schur indicator computations within the author's reach.

The paper's organization is as follows: In \cref{sec:preps} we recall categorical terminology and some results on the group-theoretical categories we study. Moreover, we show how to rewrite the relevant categories as module categories in a suitable sense, provided we are dealing with an ``adapted'' cocycle as provided by Natale. In \cref{sec:adjoints} we describe in detail the pair of adjoint functors between a group-theoretical category constructed from an adapted cocycle and its Drinfeld center. In \cref{sec:indicator-formula} we prove the indicator formula for group-theoretical categories defined by an adapted cocycle. \Cref{sec:indic-form-gener} discusses how to remove the condition that the cocycle be adapted, which is necessary to deal with twisted Drinfeld doubles in \cref{sec:indic-twist-doubl}, since the Drinfeld center of a pointed category $\VecGom$ is, in a natural way, a group-theoretical category whose cocycle is not adapted. 
\section{Preliminaries}
\label{sec:preps}

We denote the adjoint action of a group $G$ on itself by $x\adhit g=xgx\inv$. For a subgroup $H\subset G$ we call $gH$ a right coset.

Cohomology groups of a group $G$ are with coefficients in the trivial $G$-module $\CC^\times$. We apologize for admitting the notation $H^2(H,\CC^\times)$. Cochains are always normalized. The coboundary of $\alpha\in C^n(G,\CC^\times)$ is defined by $(d\alpha)(g,h,\dots)=\alpha(h,\dots)\alpha\inv(gh,\dots)\dots$. Ordinary and projective representations and their characters are over the field $\CC$; we refer to \cite{MR1215935} for projective representation theory.

\subsection{Categorical terminology}
\label{sec:categ-term}

The notions of $\C$-category (a.k.a.\ $\C$-module category, or $\C$-actegory), $\C$-bicategory, and structure preserving functor between such categories, are a well-established tool in the study of tensor categories. If we repeat some of these notions in this section, it is only to establish conventions (like the direction of associativity arrows).

The associativity constraint of a monoidal category $\C$ is $\phi\colon X\ot (Y\ot Z)\to (X\ot Y)\ot Z$. A left $\C$-category is a category $\mathcal M$ equipped with a functor $\nt\colon \C\times \mathcal D\to \mathcal D$ and a natural isomorphism $\psi\colon X\nt Y\nt V\to(X\ot Y)\nt V$ coherent with the associativity constraint of $\C$. We will make free use of the notion of module in $\mathcal D$ over an algebra $A$ in $\C$. If we need to emphasize the $\C$-category structure used in the definition of a module, we will sometimes write $_{A\nt}\mathcal D$ for the module category, otherwise just $_A\mathcal D$. If $\mathcal D,\mathcal E$ are $\C$-categories, a lax $\C$-functor is a functor $\mathcal F\colon\mathcal D\to\mathcal E$ equipped with a natural transformation $\xi\colon X\nt\mathcal F(V)\to \mathcal F(X\nt V)$ coherent with the associativity constraints for the respective $\C$-actions. It is a $\C$-functor if $\xi$ is an isomorphism. If a $\C$-functor $\mathcal F$ has a right adjoint $\mathcal G$, then $\mathcal G$ has a canonical structure $\xi'$ of lax $\C$-functor, determined by commutativity of the diagram
\begin{equation}
  \label{eq:1}
  \xymatrix{%
    \mathcal F(X\nt\mathcal G(V))\ar[r]^{\xi\inv}\ar[d]_{\mathcal F(\xi)}&X\nt\mathcal F\mathcal G(V)\ar[d]^{\epsilon}\\
      \mathcal F\mathcal G(X\nt V)\ar[r]^-{\epsilon}&X\nt V}.
\end{equation}

If $A$ is an algebra in $\C$, then a lax $\C$-functor $\mathcal F\colon \mathcal D\to\mathcal E$ induces a functor $_A\mathcal F\colon\mathcal D\to\mathcal E$. If $\mathcal F$ is a $\C$-functor and has a right adjoint $\mathcal G$, then the functor $_A\mathcal G$ is right adjoint to $_A\mathcal F$.

The left center $\CTR(\C)$ of a monoidal category $\C$ has objects $(V,c)$ where $V\in\C$, and $c\colon V\ot X\to X\ot V$ is a half-braiding. We will rather need the right center $\RCTR(\C)$ whose objects are pairs $(V,c)$ with a half-braiding $c\colon X\ot V\to V\ot X$.

\subsection{The twisted Drinfeld double of a group}
\label{sec:group-theor-categ}

The twisted double of a finite group $G$ equipped with a three-cocycle $\omega\colon G^3\to \CC^\times$ was constructed by Dijkgraaf, Pasquier and Roche \cite{MR1128130}. Majid \cite{MR1631648} explained the construction as an example of the center construction for tensor categories and introduced a sort of twisted Yetter-Drinfeld modules as a characterization of modules over the twisted double. Without ever using the quasi-Hopf algebra $D^\omega(G)$ that is the twisted double, we will use its module category, the Drinfeld center of graded vector spaces with twisted associativity. We will give some details to fix conventions and to replace Majid's Yetter-Drinfeld modules by modules in a suitable $\C$-category, an approach already taken in \cite{MR1897403} for the doubles of more general quasi-Hopf algebras. 

We denote by $\VecG$ the category of $G$-graded finite-dimensional vector spaces, and by $\VecGom$ that same category with the monoidal category structure given by the usual tensor product of graded vector spaces with the associativity constraint
\begin{equation*}
  \phi\colon U\ot(V\ot W)\ni u\ot v\ot w\mapsto \omega(|u|,|v|,|w|)u\ot v\ot w\in (U\ot V)\ot W;
\end{equation*}
here and in the sequel we assume tacitly that the elements $u,v,w$ are homogeneous, and denote by $|u|$ etc.\ their degrees in $G$.

Associated to $\omega$ we define the symbols
\begin{equation}
  \label{eq:2}
  \coc_g(x,y)=\omega(x,y,g)\omega\inv(x,y\adhit g,y)\omega(xy\adhit g,x,y)
\end{equation}
for $x,y,g\in G$. (Similar symbols are introduced in \cite{MR1128130}, and their properties are well-known; we merely vary sides and other conventions.) They satisfy the twisted cocycle condition
  \begin{equation}
    \label{eq:3}
      \coc_g(y,z)\coc_g(x,yz)=\coc_{z\adhit g}(x,y)\coc_g(xy,z).
  \end{equation}

\begin{Lem}
  $\VecG$ is a left $\VecG$-actegory with respect to the action $\otb$ defined by $X\otb V:=X\ot V$ as vector spaces, with grading defined by $|x\otb v|=|x|\adhit|v|$, and with associativity
  \begin{align}
    \label{eq:4}
    X\otb Y\otb V&\to (X\ot Y)\otb V\\
    x\otb y\otb v&\mapsto \coc_{|v|}(|x|,|y|)(x\ot y)\otb v.
  \end{align}
  A category equivalence $_{\C[G]\otb}(\VecG)\to\RCTR(\VecGom)$ is given by
  sending $V\in{_{\C[G]\otb}}(\VecG)$ to the underlying graded vector space of $V$ equipped with the half-braiding
  \begin{equation}
    \label{eq:5}
    c:X\ot V\ni x\ot v\mapsto |x|\cdot v\ot x,
  \end{equation}
     where the module structure $\mu\colon\C[G]\otb V\to V$ is given by $\mu(g\ot v)=g\cdot v$.

     The ribbon structure of $\RCTR(\Vect_G^\omega)$ is given by $\theta(v)=|v|.v$ for $v\in V\in{_{\CC[G]\otb}(\VecG)}$.
\end{Lem}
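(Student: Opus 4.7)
The plan is to verify in turn: (i) that $\otb$ satisfies the pentagon, making $\VecG$ a left $\VecG$-actegory (the acting copy is strictly associative), (ii) that the indicated assignment $F\colon{_{\CC[G]\otb}}(\VecG)\to\RCTR(\VecGom)$ is an equivalence, and (iii) the formula for the ribbon twist.

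For (i), I would expand both routes of the actegory pentagon on a homogeneous element $x\otb y\otb z\otb v$. Because the acting copy of $\VecG$ is strict, no $\omega$-factor enters from associating in it; one path contributes the scalar $\coc_{|v|}(|y|,|z|)\coc_{|v|}(|x|,|y|\,|z|)$ and the other $\coc_{|z|\adhit|v|}(|x|,|y|)\coc_{|v|}(|x|\,|y|,|z|)$. These agree by the twisted cocycle identity \eqref{eq:3}, which in turn follows from \eqref{eq:2} by a direct expansion using the $3$-cocycle condition for $\omega$.

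For (ii), the substantive point is that $c$ given by \eqref{eq:5} is a half-braiding in $\RCTR(\VecGom)$. Applied to a homogeneous $x\ot y\ot v$, the hexagon axiom involves three applications of the associator $\phi$ of $\VecGom$ (contributing a product of three $\omega$-values, which is exactly the right-hand side of the definition \eqref{eq:2} of $\coc_{|v|}(|x|,|y|)$) set against two applications of $\mu$ (contributing $\coc_{|v|}(|x|,|y|)$), so that the hexagon reduces precisely to \eqref{eq:2}. The twisted module axiom $g\cdot(h\cdot v)=\coc_{|v|}(g,h)\,(gh)\cdot v$ for an object of ${_{\CC[G]\otb}}(\VecG)$ is therefore equivalent to the hexagon for $c$. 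To construct an inverse, given $(V,c)\in\RCTR(\VecGom)$ I would use the grading-preservation of $c$ on the degree-$g$ line $\CC g\subset\CC[G]$ to write $c(g\ot v)=(g\cdot v)\ot g$ uniquely; the hexagon for $c$ then translates back into the twisted module axiom for $\mu(g\ot v)=g\cdot v$. Morphisms transport trivially in both directions since they are the underlying linear maps in both categories.

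For (iii), the ribbon twist on $\RCTR(\VecGom)$ is the standard composite of the half-braiding with the pivotal structure (which is trivial on underlying vector spaces for $\VecGom$); inserting \eqref{eq:5} yields $\theta_V(v)=|v|\cdot v$ on homogeneous $v$. I expect the main obstacle to be the hexagon computation in (ii): it is the step where \eqref{eq:2}, \eqref{eq:3}, and the $3$-cocycle relation for $\omega$ all have to fit together. Once that is in hand, essential surjectivity, full faithfulness and the ribbon formula follow cleanly.
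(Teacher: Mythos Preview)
Your proposal is correct and follows essentially the same approach as the paper: for (i) the paper also reduces the actegory pentagon to the twisted cocycle identity \eqref{eq:3}, and for (ii) the paper likewise shows that the hexagon for $c$ is equivalent to the $\CC[G]\otb$-module axiom by expanding both sides and recognising the combination of $\omega$-factors as $\coc_{|v|}(|x|,|y|)$. The paper is slightly more explicit in noting that naturality of $c$ corresponds to $\mu$ being a map and that $G$-equivariance of $c$ corresponds to $\mu$ being graded, and it omits any argument for the ribbon formula (iii), which you supply.
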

\begin{proof}
  Clearly $X\otb Y\otb V=(X\ot Y)\otb V$ as graded vector spaces. The coherence of the associator map amounts to the cocycle condition \eqref{eq:3}.

  For a vector space $V$, transformations $c\colon X\ot V\to V\ot X$ natural in $X\in\VecG$ are in bijection with maps $\mu\colon\CC[G]\ot V\to V$ by \eqref{eq:5}. If $V\in\VecG$, then $c$ is $G$-equivariant iff $\mu\colon\CC[G]\otb V\to V$ is equivariant. The hexagon equation for the half-braiding
  \begin{equation*}
    \xymatrix{X\ot(Y\ot V)\ar[rr]^-{\phi}\ar[d]_{X\ot c}&&(X\ot Y)\ot V\ar[d]^{c}\\
      X\ot(V\ot Y)\ar[d]_\phi&&V\ot(X\ot Y)\ar[d]^\phi\\
      (X\ot V)\ot Y\ar[rr]^{c\ot Y}&&(V\ot X)\ot Y}
  \end{equation*}
  is equivalent to the condition that $\mu$ be a $\CC[G]\otb$-module structure, since
  \begin{align*}
    \phi_{VXY}&c_{X\ot Y,V}\phi_{XYV}(x\ot y\ot v)\\
      &=\phi_{VXY}c_{X\ot Y,v}(\omega(|x|,|y|,|v|)x\ot y\ot v)\\
      &=\phi_{VXY}(\omega(|x|,|y|,|v|)|x||y|\cdot v\ot x\ot y)\\
      &=\omega(|x||y|\adhit v,|x|,|y|)\omega(|x|,|y|,|v|)|x||y|\cdot v\ot x\ot y\\
    (c_{XV}\ot Y)&\phi_{XVY}(X\ot c_{YV})(x\ot y\ot v)\\
      &=(c_{XV}\ot Y)\phi_{XVY}(x\ot |y|\cdot v\ot y)\\
      &=(c_{XV}\ot Y)(\omega(|x|,|y|\adhit|v|,|y|)x\ot |y|\cdot v\ot y)\\
      &=\omega(|x|,|y|\adhit|v|,|y|)|x|\cdot|y|\cdot v\ot x\ot y.
  \end{align*}
\end{proof}

\begin{Def}
  For $x\in G$ and $m\in\Z$ we define the invertible scalars $\pi_m(x)$ recursively by $\pi_0(x)=1$ and
  $\pi_{m+1}(x)=\omega(x,x^m,x)\pi_m(x)$. (Thus $\pi_{m-1}(x)=\pi_m(x)\omega\inv(x,x^{m-1},x)$.)
  
\end{Def}
We note that
\begin{equation}
  \label{eq:6}
  \omega(x,x^m,x)=\coc_x(x,x^m)=\coc_x(x^m,x).
\end{equation}

\begin{Lem}
  \begin{enumerate}
  \item Let $V\in{_{\CC[G]\otb}(\VecG)}$; denote by $\rho_V(g)$ the vector space automorphism of $V$ given by the action of $g$. Let $v\in V$ and $x=|v|$. Then $\rho_V(x)^m(v)=\pi_m(x)\rho_V(x^m)(v)$.
  \item Let $g,x\in G$. Then
    \begin{equation}
      \label{eq:7}
      \pi_m(x)=\pi_m(g\adhit x)\coc_x(g\adhit x^m,g)\coc_x\inv(g,x^m).
    \end{equation}
  \end{enumerate}
\end{Lem}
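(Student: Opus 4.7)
The plan is to prove both parts by induction on $m$, the base case $m=0$ being immediate from the normalization of $\omega$.

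For part (1), the inductive step $m\mapsto m+1$ unfolds as
\begin{equation*}
\rho_V(x)^{m+1}(v) = \pi_m(x)\rho_V(x)\rho_V(x^m)(v) = \pi_m(x)\coc_x(x,x^m)\rho_V(x^{m+1})(v) = \pi_{m+1}(x)\rho_V(x^{m+1})(v),
\end{equation*}
by the induction hypothesis, then the twisted module axiom $\rho_V(a)\rho_V(b)(v) = \coc_{|v|}(a,b)\rho_V(ab)(v)$ (valid for $V\in {_{\CC[G]\otb}(\VecG)}$ with $|v|=x$, since the associator of the $\VecG$-category $\VecG$ acts by $\coc_{|v|}$), then \eqref{eq:6}, and finally the recursive definition of $\pi$. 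For negative $m$, the unit axiom $\rho_V(e) = \id$ combined with the module axiom yields $\rho_V(x)^{-1}(v) = \coc_x(x,x^{-1})^{-1}\rho_V(x^{-1})(v) = \pi_{-1}(x)\rho_V(x^{-1})(v)$ on vectors of degree $x$, and downward induction handles $m<0$.

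For part (2) the crucial auxiliary identity is
\begin{equation*}
\coc_x(h\adhit x, h) = \coc_x(h, x) = \omega(h\adhit x, h, x),\qquad h,x\in G,
\end{equation*}
which I would verify by direct substitution into \eqref{eq:2}. In $\coc_x(h, x)$ the first two factors cancel since $x\adhit x = x$, leaving $\omega((hx)\adhit x, h, x) = \omega(h\adhit x, h, x)$. In $\coc_x(h\adhit x, h)$ the second and third factors cancel, since $(h\adhit x)h = hx$ and hence $(hx)\adhit x = h\adhit x$.

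The inductive step of part (2) then proceeds by expanding $\pi_{m+1}(x)$ and $\pi_{m+1}(g\adhit x)$ via \eqref{eq:6}, substituting the induction hypothesis, and reducing to a pure cocycle identity. Two invocations of \eqref{eq:3} handle this: with subscript $x$ and arguments $(g, x^m, x)$ it rewrites $\coc_x(x^m,x)\coc_x(g, x^{m+1})$ as $\coc_x(g, x^m)\coc_x(gx^m, x)$, while with subscript $x$ and arguments $(g\adhit x, g\adhit x^m, g)$ it rewrites $\coc_{g\adhit x}(g\adhit x, g\adhit x^m)\coc_x(g\adhit x^{m+1}, g)$ as $\coc_x(g\adhit x^m, g)\coc_x(g\adhit x, gx^m)$ (using $(g\adhit x^m)g = gx^m$ and $(g\adhit x)(g\adhit x^m) = g\adhit x^{m+1}$). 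After cancellation the identity collapses to $\coc_x(gx^m, x) = \coc_x(g\adhit x, gx^m)$, which is the auxiliary identity applied with $h = gx^m$ (noting $(gx^m)\adhit x = g\adhit x$ because $x^m$ commutes with $x$). The main obstacle is the bookkeeping of cocycle factors; once the auxiliary identity has been isolated, it is the only nontrivial cohomological input, and the rest amounts to choosing the right substitutions in the twisted cocycle condition.
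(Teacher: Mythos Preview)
Your proof of part (1) is correct and essentially the same as the paper's; you compose $\rho_V(x)$ on the left of $\rho_V(x)^m$ where the paper composes on the right, but both reduce to the same use of the twisted module law and \eqref{eq:6}.

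Your proof of part (2) is also correct, but follows a genuinely different route. The paper does \emph{not} argue by induction on $m$; instead it deduces (2) from (1) conceptually: the ribbon automorphism $\theta$ on $\RCTR(\VecGom)$ is a morphism in ${_{\CC[G]\otb}(\VecG)}$, hence so is $\theta^m$, and equating $g\cdot\theta^m(v)$ with $\theta^m(g\cdot v)$ for a nonzero $v$ of degree $x$ (after expanding both via part (1) and the twisted module law) forces the scalar identity \eqref{eq:7}. Your argument, by contrast, is a direct cohomological computation: induction on $m$, two applications of the twisted cocycle condition \eqref{eq:3}, and the auxiliary identity $\coc_x(h\adhit x,h)=\coc_x(h,x)=\omega(h\adhit x,h,x)$, whose verification from \eqref{eq:2} you sketch correctly. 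The paper's approach is shorter and explains structurally why the identity holds (it is exactly the statement that $\theta^m$ is $G$-linear), while your approach is self-contained at the level of cochains and does not invoke the ribbon structure or the existence of objects with a prescribed degree. One minor remark: your inductive scheme for part (2) as written runs upward; for negative $m$ you would need the analogous downward step, which goes through by the same mechanism (or one can simply note that \eqref{eq:7} for $-m$ follows from \eqref{eq:7} for $m$ after another application of \eqref{eq:3}).
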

\begin{proof}
  The first assertion is well-known in the theory of projective representations (noting that the elements of degree $x$ form a projective representation of $\langle x\rangle$ with cocycle $\coc_x$): We have $\rho_V(x)^0(v)=\rho_V(x^0)(v)$, and $\rho_V(x)^{m+1}(v)=\rho_V(x)^m(x.v)=\pi_m(x)x^m.x.v=\pi_m(x)\coc_x(x^m,x)x^{m+1}.v$.

  For the second assertion we note first that for any $x\in G$ there is $V\in{_{\CC[G]\otb}(\VecG)}$ and $v\in V$ with $|v|=x$.  $\theta_V=\lambda\id_V$. Then
  \begin{align*}
    g.\theta^m_V(v)&=g.\rho_V(x)^m(v)\\
      &=\pi_m(x)g.x^m.v\\
    \theta^m_V(g.v)&=\rho_V(g\adhit x)^m(g.v)\\
      &=\pi_m(g\adhit x)(g\adhit x^m).g.v\\
      &=\pi_m(g\adhit x)\coc_x(g\adhit x^m,g)(g\adhit x^m)g.v\\
      &=\pi_m(g\adhit x)\coc_x(g\adhit x^m,g)gx^m.v\\
      &=\pi_m(g\adhit x)\coc_x(g\adhit x^m,g)\coc_x\inv(g,x^m)g.x^m.v
  \end{align*}
  and $\theta^m$ is a $\CC[G]$-module map.
\end{proof}

\subsection{Group-theoretical categories}
\label{sec:group-theor-categ}

Let $G$ be a finite group, $\omega\colon G^3\to\CC^\times$ a three-cocycle, $H\subset G$ a subgroup, and $\psi\colon H^2\to\CC^\times$ a two-cochain such that $d\psi=\omega$. Then the twisted group ring $\CC_{\psi}[H]$ is an associative algebra in $\VecGom$, and one can form the category $\C(G,H,\omega,\psi):={_{\CC_\psi[H]}(\VecGom)_{\CC_\psi[H]}}$ of bimodules in the category $\VecGom$. Endowed with the tensor product over $\CC_\psi[H]$ in the category, this becomes a fusion category; the categories thus constructed are called group-theoretical fusion categories \cite{MR2183279,MR1976233}. We will refer to the quadruple $(G,H,\omega,\psi)$ as \emph{group-theoretical data}.

In order to be able to apply the framework in \cite{MR1887584}, Natale has shown in \cite{MR2196640} that in the definition of a group-theoretical fusion category one can always choose a three-cocycle whose restriction to $H^3$ is trivial (rather than just a coboundary), and satisfying even more restrictive assumptions.
\begin{Def}
  Let $G$ be a finite group and $H\subset G$ a subgroup. We will call a three-cocycle $\omega$ on $G$ \emph{adapted} if $\omega|_{G\times G\times H}=1$.
\end{Def}
\begin{Prop}[Natale]
  Let $\omega$ be a three-cocycle on the finite group $G$ and $\psi$ a two-cochain on $H\subset G$ such that $\omega|_{H^3}=d\psi$. Then there is an adapted cocycle $\omega'$ cohomologous to $\omega$ such that
  \begin{equation}
    \label{eq:8}
    {_{\CC_\psi[H]}(\VecGom)_{\CC_\psi[H]}}\cong{_{\CC[H]}(\Vect_G^{\omega'})_{\CC[H]}}
  \end{equation}
  as fusion categories.
\end{Prop}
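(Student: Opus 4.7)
The approach is to reformulate the statement cohomologically. A 2-cochain $\eta$ on $G$ yields a monoidal equivalence $F_\eta\colon \VecGom \to \Vect_G^{\omega'}$ with $\omega' := \omega \cdot (d\eta)^{-1}$, given by the identity on underlying graded vector spaces and morphisms and incorporating $\eta$ into the tensor coherence. Under $F_\eta$, the twisted group algebra $\CC_\psi[H]$ inside $\VecGom$ transports to a twisted group algebra $\CC_{\psi'}[H]$ inside $\Vect_G^{\omega'}$, where $\psi'$ differs from $\psi$ by the restriction $\eta|_{H\times H}$ (up to convention for the sign of the twist). Thus the plan is to construct a 2-cochain $\eta$ on $G$ such that (a) $\eta|_{H\times H}=\psi$, so the transported algebra is $\CC[H]$, and (b) $\omega' = \omega \cdot (d\eta)^{-1}$ is adapted. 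The functor $F_\eta$ then induces an equivalence of bimodule categories that is manifestly monoidal with respect to tensor products over the respective algebras, giving \eqref{eq:8}.

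I would build $\eta$ in two stages. Stage one: extend $\psi$ to a normalized 2-cochain $\tilde\psi$ on $G$, for instance by declaring $\tilde\psi(g,g')=1$ unless both arguments lie in $H$, and replace $\omega$ by $\omega_1 := \omega \cdot (d\tilde\psi)^{-1}$. Since $d\psi = \omega|_{H^3}$, we get $\omega_1|_{H^3} = 1$. Stage two: find a 2-cochain $\mu$ on $G$ with $\mu|_{H\times H} = 1$ such that $\omega' := \omega_1 \cdot (d\mu)^{-1}$ vanishes on $G \times G \times H$. Setting $\eta := \tilde\psi \cdot \mu$ then satisfies both (a) and (b).

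The main obstacle is stage two, i.e.\ producing $\mu$. Fix a set $T \subset G$ of right coset representatives of $H$ containing $e$, and write every $g \in G$ uniquely as $g = t(g) h(g)$ with $t(g) \in T$ and $h(g) \in H$; for $h \in H$ one then has $t(h)=e$ and $h(h)=h$. I would attempt to define $\mu(g,g')$ by an explicit product of values of $\omega_1$ on triples assembled from $t(g), h(g), t(g'), h(g')$ and their products, arranged so that repeated application of the 3-cocycle identity for $\omega_1$ reduces $(\omega_1 \cdot (d\mu)^{-1})(g,g',h)$ with $h\in H$ to a product of values of $\omega_1$ whose last argument has been absorbed into $H$, whereupon $\omega_1|_{H^3}=1$ finishes the job; the same normalization conditions, together with $t(h)=e$ for $h\in H$, should automatically give $\mu|_{H\times H}=1$. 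Writing down the correct formula and verifying these properties is a lengthy but essentially direct cocycle manipulation; I would read the explicit $\mu$ off from the quasi-bialgebra constructions of \cite{MR1887584} invoked in the statement's attribution, since those constructions implicitly perform precisely this rectification of $\omega_1$ to an adapted cocycle.
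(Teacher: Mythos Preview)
Your approach is correct and is essentially the one the paper follows (by citation): the proposition is stated without proof and attributed to Natale \cite{MR2196640}, but in \cref{sec:indic-form-gener} the paper spells out Natale's explicit cochains, which realize exactly your two-stage plan. After your stage one reduces to $\omega_1|_{H^3}=1$, the paper's stage two is itself split in two: with $p,q$ in a fixed transversal $Q$ and $h,h'\in H$, set $\eta_1(ph,qh')=\omega_1(p,h,h')$ to obtain $\omega_0:=\omega_1(d\eta_1)$ trivial on $G\times H\times H$, then $\eta_2(ph,qh')=\omega_0(ph,q,h')\omega_0^{-1}(p,h,q)$ to make $\omega_0(d\eta_2)$ adapted; both $\eta_1$ and $\eta_2$ restrict trivially to $H\times H$, which is precisely your requirement on $\mu$.
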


Backed up by Natale's result, we will now assume that the group-theoretical category we are dealing with is $_{\CC[H]}(\VecGom)_{\CC[H]}$ for an adapted cocycle $\omega$. Non-adapted cocycles will only return in \cref{sec:indic-form-gener}.

\begin{Rem}[cf.\cite{MR1887584,MR2196640}]
  Let $H\subset G$ be a subgroup, and $\omega$ an adapted three-cocycle on $G$.
  \begin{align}
    \label{eq:9}
    \forall x,y,z\in G,h\in H\colon&\omega(x,y,zh)=\omega(x,y,z)\\
    \forall g,x\in G,h\in\label{eq:10} H\colon&\coc_g(x,h)=\omega(x,h,g)=:\omega_g(x,h)\\
    \forall g,x\in G,h,y\in H\colon&\omega_{gh}(x,y)=\omega_g(x,y)\label{eq:11}
  \end{align}
\end{Rem}

\begin{Rem}
  $\VecGom$ being a monoidal category, $\VecG$ is a $\VecGom$-bicategory. Since $\VecH\subset\VecGom$ is a monoidal subcategory thanks to the triviality of $\omega|_{H^3}$, it follows that $\VecG$ is a $\VecH$-bicategory. While the right action of $\VecH$ is the canonical one with trivial associator, and the associator between left and right action is also trivial, all thanks to the assumption that $\omega$ is adapted, the left action, which we will denote by $\ota$, has the modified associativity
  \begin{align}
    \label{eq:12}
    X\ota Y\ota V&\to (X\ot Y)\ota V\\
    x\ota y\ota v&\mapsto\omega_{|v|}(|x|,|y|)(x\ot y)\ota v.
  \end{align}
  By construction we have  $_{\CC[H]\ota}(\VecG)_{\CC[H]}={_{\CC[H]}(\VecGom)_{\CC[H]}}$.
\end{Rem}

\begin{Lem}
  The $\VecH$-category structure on $\VecGom$ induces a $\VecH$-category structure on $\VecGHom$, which we define to be the category of finite dimensional vector spaces graded by $G/H$. We have an equivalence of $\VecH$-categories
  \begin{align*}
    Q\colon{(\VecGom)_{\CC[H]}}&\to{(\VecGHom)}\\
    M&\mapsto M/H\\
  \intertext{inducing a category equivalence}
    _{\CC[H]}Q\colon{_{\CC[H]}(\VecGom)_{\CC[H]}}&\to{_{\CC[H]}(\VecGHom)}
  \end{align*}
\end{Lem}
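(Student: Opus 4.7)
The plan has four stages: defining the $\VecH$-category structure on $\VecGHom$, constructing $Q$ together with its $\VecH$-functor data, checking that $Q$ is a category equivalence, and invoking the general principle from \cref{sec:categ-term} to conclude that $_{\CC[H]}Q$ is an equivalence.

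For the $\VecH$-action on $\VecGHom$, I would set $X\ota V$ to be $X\otimes V$ with grading $|x\ota v|=|x||v|\in G/H$, and take the same associator formula $x\ota y\ota v\mapsto \omega_{|v|}(|x|,|y|)(x\otimes y)\ota v$ as on $\VecGom$. Well-definedness requires that $\omega_{|v|}(|x|,|y|)$ depends on $|v|$ only through the coset $|v|H$, which is precisely identity \eqref{eq:11}. The pentagon then follows from the cocycle condition \eqref{eq:3} exactly as for the $\VecH$-action on $\VecGom$.

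Next I would set $Q(M):=M\ou{\CC[H]}\CC$. Since the right $H$-action on $M$ shifts the $G$-grading by right multiplication, $Q(M)$ is canonically $G/H$-graded with $Q(M)_{gH}\cong M_g$ for any chosen representative $g$. To equip $Q$ with a $\VecH$-functor structure I would take the natural isomorphism $\xi\colon X\ota Q(M)\to Q(X\ota M)$ given by $x\otimes[m]\mapsto[x\otimes m]$; this is well-defined and bijective because the left $\ota$-action and the right $\CC[H]$-action operate on different tensor factors and hence commute, and the two relevant associator scalars are tautologically equal to $\omega_{|v|}(|x|,|y|)$.

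To see that $Q$ is a category equivalence, I would exploit the fact that every right $\CC[H]$-module $M$ in $\VecGom$ is free: decomposing $M=\bigoplus_{gH\in G/H}M_{gH}$ with $M_{gH}=\bigoplus_{g'\in gH}M_{g'}$, the $H$-action yields isomorphisms $M_g\to M_{gh}$, so each $M_{gH}$ is free of rank $\dim M_g$ over $\CC[H]$. Essential surjectivity then follows by lifting $V=\bigoplus_tV_t\in\VecGHom$ to $M=\bigoplus_tV_t\otimes\CC[H]$ after choosing coset representatives $g_t$; full faithfulness follows because a grading-preserving right $H$-linear map $M\to N$ is uniquely determined by its restriction to each $M_g$, which is precisely the data of a grading-preserving map $Q(M)\to Q(N)$. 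The final assertion, that $_{\CC[H]}Q$ is an equivalence of left $\CC[H]$-module categories, is then automatic from the general principle recalled in \cref{sec:categ-term}: a $\VecH$-functor equivalence restricts to an equivalence of module categories over any algebra in $\VecH$, here $\CC[H]$ itself. The only real subtlety throughout is the bookkeeping of cocycle scalars, which is handled cleanly by the adaptedness identities.
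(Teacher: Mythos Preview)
Your proposal is correct. The paper states this lemma without proof, presumably because the argument is routine once the adaptedness identities \eqref{eq:9}--\eqref{eq:11} are in hand; your write-up supplies exactly the expected details, and the appeal to \cref{sec:categ-term} for the passage to $_{\CC[H]}Q$ matches the paper's intended mechanism.
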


\section{Some pairs of adjoint functors}
\label{sec:adjoints}

We continue to work under the general assumption that we have group-theoretical data $(G,H,\omega,1)$ with a three-cocycle $\omega$ that is adapted for the subgroup $H\subset G$.

\begin{Rem}
  The functor $\mathcal F\colon\Vect_G\to\Vect_{G/H}$ defined by corestriction of the grading is a strict left $\VecH$-functor from $(\VecG,\otb)$ to $(\VecGHom,\ota)$ by \eqref{eq:10}.
\end{Rem}

\begin{Lem}
  There is a monoidal category equivalence \[T\colon{_{\CC[G]\otb}(\VecG)}\to\RCTR\left({_{\CC[H]}(\VecGom)_{\CC[H]}}\right)\] making the diagram
    \begin{equation*}
      \xymatrix{%
 _{\CC[G]\otb}(\VecG)\ar[r]^-T\ar[d]_{\mathcal R}&
 \RCTR\left({_{\CC[H]}(\VecGom)_{\CC[H]}}\right)\ar[dd]^{\mathcal U}\\
 _{\CC[H]\otb}(\VecG)\ar[d]_{_{\CC[H]}\mathcal F}\\
 _{\CC[H]\ota}(\VecGHom)  &\ar[l]^{_{\CC[H]}Q}{_{\CC[H]}(\VecGom)_{\CC[H]}}}
    \end{equation*}
    commute. Here $\mathcal U$ is the underlying functor, and $\mathcal R$ is defined by restricting module structures.
\end{Lem}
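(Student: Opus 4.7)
The plan is to build $T$ as a composition of two monoidal equivalences. The previous lemma already gives a monoidal equivalence $S\colon {_{\CC[G]\otb}(\VecG)}\to\RCTR(\VecGom)$. It then suffices to combine this with the monoidal equivalence
\[
\RCTR(\VecGom)\xrightarrow{\sim}\RCTR\bigl({_{\CC[H]}(\VecGom)_{\CC[H]}}\bigr)
\]
provided by Morita invariance of the Drinfeld center, in the form proved by Schauenburg in \cite{MR1822847}. This is applicable because the assumption that $\omega$ is adapted forces $\omega|_{H^3}=1$, so $\CC[H]$ is a genuine group algebra in a symmetric sub-monoidal category of $\VecGom$ and is in particular a separable (symmetric Frobenius) algebra, which is the hypothesis under which the center of its bimodule category is equivalent to the center of the ambient category.

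To make the diagram check concrete I would spell out $T(V)$ explicitly. Given $V\in{_{\CC[G]\otb}(\VecG)}$ with $G$-grading $v\mapsto|v|$, I expect the underlying bimodule of $T(V)$ to be modeled on $V\ot\CC[H]$, with $G$-grading $|v\ot h|=|v|h$, right $\CC[H]$-action by multiplication in the $\CC[H]$-factor, left $\CC[H]$-action of the shape $h\cdot(v\ot k)=(h\otb v)\ot hk$ (possibly twisted by cocycle factors coming from $\omega_{|v|}$), and half-braiding inherited from the one on $V\in\RCTR(\VecGom)$ produced by the previous lemma. With this presentation, the round trip along the right and bottom edges of the diagram is very direct: $\mathcal U$ merely forgets the half-braiding, and $_{\CC[H]}Q$ quotients by the right $\CC[H]$-action, which identifies $v\ot h$ with $v\ot 1$ and leaves the $G$-graded vector space $V$ carrying its $G/H$-grading $|v|H$ and the induced left action $h\cdot v=h\otb v$. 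This is equal, on the nose, to the result of travelling down the left edge: $\mathcal R$ restricts the $\CC[G]$-action to $\CC[H]$, and $_{\CC[H]}\mathcal F$ corestricts the $G$-grading to a $G/H$-grading.

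The main obstacle is the verification step. One needs to check that the bimodule axioms, the compatibility of the half-braiding with the bimodule structure, and well-definedness of the half-braiding with respect to tensor products over $\CC[H]$, all hold for the formula for $T(V)$, and then that $T$ is monoidal and essentially surjective. These verifications are in essence the content of the Morita invariance theorem, so one can either invoke \cite{MR1822847} directly or redo the bookkeeping by hand, the latter amounting to a systematic use of the adaptedness identities \eqref{eq:9}--\eqref{eq:11} and the twisted cocycle relation \eqref{eq:3}. Everything downstream of that is formal: commutativity of the diagram follows once $T(V)$ is written in the form above, since both composites send $V$ to the same $G/H$-graded $\CC[H]$-module in $\VecGHom$, and naturality in $V$ is immediate from the construction.
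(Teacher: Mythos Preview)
Your approach is essentially the same as the paper's: both compose the equivalence ${_{\CC[G]\otb}(\VecG)}\cong\RCTR(\VecGom)$ from the previous lemma with the Morita invariance of the center from \cite{MR1822847}, write $T(V)=V\ot\CC[H]$ with the indicated bimodule structure, and verify the diagram by observing that ${_{\CC[H]}Q}\,\mathcal U T(V)$ is $V$ with $G/H$-grading and $H$-action restricted from the $G$-action. The one point where the paper is sharper is your hedge ``possibly twisted by cocycle factors coming from $\omega_{|v|}$'': because $\omega$ is adapted, associativity isomorphisms with rightmost tensor factor in $\VecH$ are trivial, so the left action is exactly $h\cdot(v\ot h')=h\cdot v\ot hh'$ with no extra cocycle factor --- this is the only place adaptedness is actually used in the argument, and it is worth stating rather than leaving open.
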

\begin{proof}
By \cite{MR1822847}, the center of the bimodule category $_{\CC[H]}(\VecGom)_{\CC[H]}$ is equivalent to the center of $\VecGom$.
The equivalence maps $V\in\RCTR(\VecGom)$ to $V\ot\CC[H]$ with the obvious right $\CC[H]$-module structure, the left $\CC[H]$-module structure
\begin{equation*}
  \CC[H]\ot V\ot\CC[H]
\xrightarrow{c\ot\CC[H]}V\ot\CC[H]\ot\CC[H]
\xrightarrow{V\ot\nabla}V\ot\CC[H],
\end{equation*}
and some half-braiding that we shall not need.
Here it would seem that, passing from \cite{MR1822847}, where results are formulated for strict monoidal categories, to the present concrete situation we have forgotten to put in parentheses in the triple tensor product, and associativity isomorphisms to switch them. However, associativities with the rightmost factor in $\VecH$ are trivial.

Since $_{\CC[G]\otb}(\VecG)\cong\RCTR(\VecGom)$, we have the desired equivalence, and its composition with the underlying functor maps $V$ to $V\ot\CC[H]$ with diagonal grading, obvious right $\CC[H]$-module structure endowed with the left $\CC[H]$-module structure
\begin{equation*}
  h\cdot(v\ot h')=(V\ot\nabla)(c(h\ot v)\ot h')=h\cdot v\ot hh'.
\end{equation*}
Thus, $Q\mathcal U T(V)=V$ with unchanged grading and $H$-action obtained from restricting the $G$-action.
\end{proof}

\begin{Prop}
  The right adjoint functor of the functor $\mathcal F\colon\VecG\to\VecGHom$ defined by coarsening the grading is given by cotensor product:
  \begin{align*}
    \mathcal G\colon\VecGHom&\rightarrow\VecG\\
    V&\mapsto \CC[G]\co{\CC[G/H]}V=\{\sum g_i\ot v_i\in \CC[G]\ot V|g_i\in|v_i|\}.
  \end{align*}
  $\mathcal G$ is a $\VecH$-functor with respect to
  \begin{align}
    \label{eq:13}
    \xi\colon X\otb(\CC[G]\co{\CC[G/H]}V)&\to\CC[G]\co{\CC[G/H]}(X\ota V)\\
    \notag x\ot g\ot v&\mapsto |x|\adhit g\ot x\ota v\
  \end{align}
  The functor $_{\CC[H]}\mathcal F\colon{_{\CC[H]}(\VecG)}\to{_{\CC[H]}(\VecGHom)}$ has the right adjoint $_{\CC[H]}\mathcal F$ that maps $V$ to $\CC[G]\co{\CC[G/H]}V$ endowed with the left $\CC[H]$-action
  $h(g\ot v)=h\adhit g\ot hv$.
\end{Prop}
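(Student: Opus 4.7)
The plan is threefold: first establish the plain adjunction $\mathcal F\dashv\mathcal G$ by inspection of grading constraints; then recognize $\xi$ as the canonical lax $\VecH$-structure on the right adjoint coming from \eqref{eq:1} and check that it is an isomorphism; and finally apply the induction principle recalled at the end of \cref{sec:categ-term} to pass to $\CC[H]$-modules.

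For the plain adjunction, any morphism $f\colon X\to \mathcal G V=\CC[G]\co{\CC[G/H]}V$ in $\VecG$ sends a homogeneous $x$ of $G$-degree $g$ to an element of $G$-degree $g$ inside the cotensor product, which is forced to have the form $g\ot\tilde f(x)$ for a unique $\tilde f(x)\in V$ of $G/H$-degree $\overline{g}$; the assignment $f\mapsto\tilde f$ is a natural bijection $\Hom_{\VecG}(X,\mathcal G V)\cong\Hom_{\VecGHom}(\mathcal F X,V)$, with counit $\epsilon_V(g\ot v)=v$ and unit $\eta_X(x)=|x|\ot x$. To identify the stated $\xi$ with the canonical lax structure, I would observe that strictness of $\mathcal F$ collapses diagram \eqref{eq:1} to the single requirement $\epsilon_{X\ota V}\circ\mathcal F(\xi)(x\ot g\ot v)=x\ot v$; writing $\xi(x\ot g\ot v)=g'\ot w$, the counit forces $w=x\ot v$ and matching $G$-degrees forces $g'=|x|\adhit g$. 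This pins down $\xi$, and since it merely permutes basis elements it is automatically an isomorphism. Its coherence with the associators of $\otb$ and $\ota$ (i.e.\ the pentagon with two applications of $\xi$ along the top and one along the bottom) reduces to the scalar identity $\coc_g(|x|,|y|)=\omega(|x|,|y|,g)$, which is exactly \eqref{eq:10}, valid since $|y|\in H$. This is the one point at which the adapted-cocycle hypothesis enters, and it is the main nontrivial calculation in the argument.

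For the final assertion, with $A=\CC[H]$ the principle at the end of \cref{sec:categ-term} immediately produces $_{\CC[H]}\mathcal G$ as a right adjoint of $_{\CC[H]}\mathcal F$. The induced $\CC[H]$-module structure on $\mathcal G V$ is the composite
\begin{equation*}
\CC[H]\otb \mathcal G V\xrightarrow{\xi}\mathcal G(\CC[H]\ota V)\xrightarrow{\mathcal G(\mu_V)}\mathcal G V,
\end{equation*}
which on unwinding sends $h\ot g\ot v$ to $(h\adhit g)\ot hv$, as claimed.
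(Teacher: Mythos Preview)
Your proof is correct and follows essentially the same route as the paper's: both identify $\mathcal G$ as coinduction (the paper cites the standard comodule fact, you compute the Hom-set bijection directly), both derive $\xi$ from the counit via the diagram~\eqref{eq:1} using strictness of $\mathcal F$, and both obtain the $\CC[H]$-action by composing $\xi$ with $\mathcal G(\mu)$. One small remark: your explicit pentagon verification for $\xi$ via \eqref{eq:10} is redundant---once $\mathcal F$ is known to be a (strict) $\VecH$-functor (which is exactly what the Remark preceding the Proposition establishes using the same identity \eqref{eq:10}), the canonical lax structure on the right adjoint is automatically coherent by the general principle recalled in \cref{sec:categ-term}, so the adapted-cocycle hypothesis has already done its work before you reach the coherence check.
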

\begin{proof}
  Gradings being comodule structures, the adjoint $\mathcal G$ is the well-known coinduction functor adjoint to corestricting comodule structures along a coalgebra map.
  
  The counit of adjunction is $\epsilon\colon\CC[G]\co{\CC[G/H]}V\ni g\ot v\mapsto v\in V$. Since $\mathcal F$ is a strict $\VecH$-functor, the adjoint has a unique lax $\VecH$-functor structure determined by commutativity of
  \begin{equation*}
    \xymatrix{%
      X\otb(\CC[G]\co{\CC[G/H]}V      )\ar@{=}[r]\ar[d]_{\xi}&X\ota(\CC[G]\co{\CC[G/H]}V)\ar[d]^{X\ota\epsilon}\\
      \CC[G]\co{\CC[G/H]}(X\ota V)\ar[r]^-{\epsilon}&X\ota V
    }
  \end{equation*}
Thus, $\xi(x\otb g\ot v)=|x|\adhit g\ot x\ota v$, and clearly $\xi$ is an isomorphism. If $V$ is a $\CC[H]$-module in $(\VecG,\ota)$, then $\mathcal G V$ is a $\CC[H]$-module in $\VecGHom$ by
\begin{equation}
  \label{eq:14}
  \CC[H]\otb(\CC[G]\co{\CC[G/H]}V)\xrightarrow{\xi}\CC[G]\co{\CC[G/H]}(\CC[H]\ota V)\xrightarrow{\CC[G]\ot\mu}\CC[G]\co{\CC[G/H]}V
\end{equation}
giving the claimed formula.
\end{proof}
\begin{Cor}We have a commutative diagram
    \begin{equation*}
      \xymatrix{%
 _{\CC[G]\otb}(\VecG)\ar[r]^-T&
 \RCTR\left({_{\CC[H]}(\VecGom)_{\CC[H]}}\right)\\
 _{\CC[H]\ota}(\VecGHom)\ar[u]^{L}  &\ar[l]^{_{\CC[H]}Q}{_{\CC[H]}(\VecGom)_{\CC[H]}}\ar[u]_{\ol K}}
    \end{equation*}
  where $\ol K$ denotes the two-sided adjoint to the underlying functor, and 
  \begin{align*}
    L\colon{_{\CC[H]}(\VecGHom)}&\to{_{\CC[G]\otb}(\VecG)}\\
    V&\mapsto \CC[G]\oub{\CC[H]}(\CC[G]\co{\CC[G/H]}V).
  \end{align*}
\end{Cor}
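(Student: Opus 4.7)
The plan is to pass to right adjoints throughout the commutative diagram of the preceding lemma, which rearranges to
\begin{equation*}
\mathcal U=({_{\CC[H]}Q})\inv\circ{_{\CC[H]}\mathcal F}\circ\mathcal R\circ T\inv.
\end{equation*}
Taking right adjoints of both sides will then yield the desired equality $\ol K=T\circ L\circ{_{\CC[H]}Q}$, since $\ol K$ is by hypothesis the right (as well as left) adjoint of $\mathcal U$.

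Three of the four right adjoints to take are immediate or already in hand. The functors $T$ and $_{\CC[H]}Q$ are category equivalences, so their right adjoints are their quasi-inverses; in particular $(T\inv)^R=T$ and $(({_{\CC[H]}Q})\inv)^R={_{\CC[H]}Q}$. The right adjoint of $_{\CC[H]}\mathcal F$ is supplied by the preceding proposition: it maps $V$ to $\CC[G]\co{\CC[G/H]}V$ with the $\CC[H]$-action $h(g\ot v)=h\adhit g\ot hv$. It remains only to identify the right adjoint of $\mathcal R$, the restriction of module structures along the algebra inclusion $\CC[H]\subset\CC[G]$ inside $(\VecG,\otb)$. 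Its left adjoint is the induction functor $W\mapsto\CC[G]\oub{\CC[H]}W$ by general principles, and because $\CC[G]$ is free of rank $[G:H]$ as a right $\CC[H]$-module (with any system of coset representatives as a basis), induction and coinduction coincide, so the same functor also serves as the right adjoint. Composing, the right adjoint of $_{\CC[H]}\mathcal F\circ\mathcal R$ sends $V$ to $\CC[G]\oub{\CC[H]}(\CC[G]\co{\CC[G/H]}V)$, which is precisely $L(V)$; assembling all four right adjoints in reverse order produces $T\circ L\circ{_{\CC[H]}Q}$, as needed.

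The chief minor obstacle I anticipate is verifying that the left $\CC[H]$-action on $\CC[G]\co{\CC[G/H]}V$ produced by the proposition is correctly transported through induction over $\CC[H]$ to the expected $\CC[G]$-module structure on $L(V)$ in $(\VecG,\otb)$. But because $\omega$ is adapted, all associators involving a rightmost $\CC[H]$-factor are trivial, so this amounts to a direct bookkeeping exercise with no cocycle corrections intervening.
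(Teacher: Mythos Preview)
Your approach is correct and matches what the paper intends: the Corollary is stated without proof precisely because it follows from the preceding Lemma by passing to adjoints throughout, exactly as you describe. The identification of the right adjoints of the two equivalences with their quasi-inverses, and of $({_{\CC[H]}\mathcal F})^R$ with the cotensor functor from the preceding Proposition, is straightforward; the only point requiring a word of justification is that induction $\CC[G]\oub{\CC[H]}(-)$ serves as \emph{right} (and not merely left) adjoint to restriction $\mathcal R$, and your freeness observation handles this in the present finite semisimple setting.
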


\section{Indicator formulas for adapted cocycles}
\label{sec:indicator-formula}

We continue to work with group-theoretical data $(G,H,\omega,1)$ in which $\omega$ is adapted.

In the sequel, for a vector space $V$ with some (twisted) action of a group $\Gamma$, we denote by $\rho_V(\gamma)$ the endomorphism of $V$ defined by $\rho_V(v)=\gamma\cdot v$. If $W$ is a vector space graded by the set $X$, and $Y\subset X$, we let $W_Y$ be the span of the homogeneous elements of $W$ with degrees in $Y$.

\begin{Prop}
  Let $M\in\GTHC$ correspond to $W=Q(M)\in\AGTHC$. Then the $m$-th Frobenius-Schur indicator of $M$ is
  \begin{equation}
    \label{eq:15}
    \nu_m(M)=\dfrac{1}{|H|}\sum_{\substack{x\in G\\x^m\in H}}\pi_{-m}(x)\Tr(\rho_{W_{xH}}(x^{-m}))
  \end{equation}
\end{Prop}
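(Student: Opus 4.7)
Plan proposal: My plan is to apply the indicator formula of \cite{NgSch:FSIESC}: for $M$ in a pivotal fusion category $\C$ of global dimension $d$, one has $\nu_m(M) = \frac{1}{d}\Tr(\theta^{-m}|_{\ol K(M)})$ where $\ol K$ is the two-sided adjoint to the underlying functor $\RCTR(\C)\to\C$ and $\theta$ is the ribbon. For $\C = \GTHC$ with $d = |G|$, the corollary identifies $\ol K(M)$ via $T$ with $L(W) = \CC[G]\oub{\CC[H]}(\CC[G]\co{\CC[G/H]}W)$ in ${_{\CC[G]\otb}(\VecG)}$, where $W = Q(M)$; the ribbon translates to $\theta(v) = |v|\cdot v$ by the first lemma, and the $\pi_m$-lemma gives $\theta^{-m}(v) = \pi_{-m}(|v|)\rho(|v|^{-m})(v)$ for $v$ homogeneous, so
\[\nu_m(M) = \frac{1}{|G|}\sum_{y\in G}\pi_{-m}(y)\,\Tr\bigl(\rho_{L(W)}(y^{-m})\big|_{L(W)_y}\bigr).\]

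I would then unpack $L(W)$ using a transversal $T\subset G$ for $G/H$. Induction gives $L(W) \cong \bigoplus_{t\in T}t\otimes_{\CC[H]}(\CC[G]\co W)$ as vector spaces, with $\otb$-grading $|t\otimes_H(g\ot w)| = tgt^{-1}$. The $\CC[G]\otb$-module structure is determined by the adjunction unit $\rho(g)(1\otimes n) = g\otimes n$ and the twisted module axiom, yielding $\rho(g)(t\otimes n) = \coc_{|n|}(g,t)(gt\otimes n)$. Consistency of this action with the universal property of induction for the twisted $\otb$-modules forces a cocycle-twisted tensor relation $(gh)\otimes n = \coc_{|n|}^{-1}(g,h)(g\otimes hn)$ for $g\in G$, $h\in H$ (without this twist the extension $\tilde f\colon L(W)\to V$ of a $\CC[H]\otb$-equivariant map fails to be well defined when $\coc\neq 1$).

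Applying this to $g = y^{-m}$ and $v = t\otimes(z\otimes w)\in L(W)_y$ with $z = t^{-1}yt$: since $y^{-m}t = tz^{-m}$, the $t$-block contributes to the diagonal of the trace precisely when $z^{-m}\in H$, equivalently $x^m\in H$ with $x:=z$. In that case
\[\rho(y^{-m})(t\otimes(z\otimes w)) = \coc_z(y^{-m},t)\,\coc_z^{-1}(t,z^{-m})\,(t\otimes z\otimes z^{-m}w)\]
(using that $z^{-m}\adhit z = z$). Combining this prefactor with $\pi_{-m}(y) = \pi_{-m}(z)\coc_z^{-1}(y^{-m},t)\coc_z(t,z^{-m})$---the conjugation identity of the $\pi_m$-lemma applied with $g = t$, $x = z$, noting $t\adhit z = y$ and $t\adhit z^{-m} = y^{-m}$---the four cocycle factors cancel pairwise, leaving the clean summand $\pi_{-m}(x)\Tr(\rho_{W_{xH}}(x^{-m}))$.

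Finally, for each fixed $t$ the substitution $y\mapsto x = t^{-1}yt$ is a bijection of $G$, so
\[\nu_m(M) = \frac{1}{|G|}\sum_{t\in T}\sum_{\substack{x\in G\\x^m\in H}}\pi_{-m}(x)\,\Tr\bigl(\rho_{W_{xH}}(x^{-m})\bigr),\]
and since the inner sum is $t$-independent and $|T| = |G|/|H|$, the prefactor collapses to $1/|H|$, yielding the asserted formula. The main obstacle is the third step: identifying the correct cocycle-twisted tensor relation in $\oub{\CC[H]}$ and verifying that the accumulated cocycle factors from the $\otb$-action, the tensor relation, and the $\pi_{-m}$ conjugation cancel exactly; the adaptedness of $\omega$ (via \eqref{eq:9}--\eqref{eq:11}) and the twisted cocycle identity \eqref{eq:3} are essential for ensuring the resulting summand is independent of $t$.
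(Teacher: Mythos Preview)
Your proposal is correct and follows the same overall strategy as the paper: identify $\ol K(M)$ with $L(W)$ via the equivalence $T$, and compute $\nu_m(M)$ as $|G|^{-1}$ times the trace of a power of the ribbon on $L(W)$, then collapse the transversal sum. The only organizational difference is in how the ribbon power is evaluated on basis elements. The paper runs a direct induction on $m$ to prove $\theta^m(g\ot x\ot v)=\pi_m(x)\coc_x(g,x^m)\,gx^m\ot x\ot v$; the factor $\pi_m(x)$ already comes out indexed by the inner coordinate $x$ (not by the degree $g\adhit x$), and when $x^m\in H$ the single remaining cocycle $\coc_x(g,x^m)$ is absorbed by the tensor relation $gx^m\ot n=\coc_x^{-1}(g,x^m)\,g\ot x^m n$. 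You instead first invoke the $\pi_m$-lemma to write $\theta^{-m}(v)=\pi_{-m}(|v|)\rho(|v|^{-m})(v)$ with $|v|=y=t\adhit z$, so the prefactor is $\pi_{-m}(t\adhit z)$; you then need the conjugation identity \eqref{eq:7} to convert it to $\pi_{-m}(z)$, and the two cocycle factors this produces cancel exactly against those coming from the $\otb$-action and the tensor relation. Both routes land on the same formula \eqref{eq:18}; the paper's direct induction is marginally shorter because it never introduces, and hence never has to undo, the conjugation.
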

\begin{proof}
  Note first that the formula makes sense: If $x^m\in H$, then $x^m(xH)=xH$, and $\rho_{W_{xH}}(x^{-m})$ is well-defined.
  
  For $g\ot x\ot w\in\CC[G]\oub{\CC[H]}(\CC[G]\co{\CC[G/H]}W)$ with $g\in G$, $w\in W$ and $x\in|w|$ we have:
  \begin{equation}
    \label{eq:16}
    \theta^m_{L(W)}(g\ot x\ot v)=\pi_m(x)\coc_x(g,x^m)gx^m\ot x\ot v.
  \end{equation}
  In fact this is true for $m=0$, and if it is true for $m$ then
  \begin{align*}
    \theta^{m+1}(g\ot x\ot v)&=\theta^m((g\adhit x).(g\ot x\ot v))\\
    &=\theta^m(\coc_x(g\adhit x,g)(g\adhit x)g\ot x\ot v) \\
    &=\theta^m(\coc_x(g,x)gx\ot x\ot v)\\
    &=\pi_m(x)\coc_x(gx,x^m)\coc_x(g,x)gx^{m+1}\ot x\ot v\\
    &=\pi_m(x)\coc_x(g,x^{m+1})\coc_x(x,x^m)gx^{m+1}\ot x\ot v\\
    &=\pi_{m+1}\coc_x(g,x^{m+1})gx^{m+1}\ot x\ot v.
  \end{align*}
  If $R$ is a transversal of the right $H$-cosets in $G$, then
  \begin{equation}
    \label{eq:17}
    L(W)=\bigoplus_{\substack{g\in R\\x\in G}}\CC g\ot \CC x\ot W_{xH}.
  \end{equation}
  According to \eqref{eq:16}, the only terms of the sum \eqref{eq:17} mapped into themselves by $\theta^m$ are those where $x^m\in H$. But for $x^m\in H$ we have
  \begin{equation}
    \label{eq:18}
    \theta_{L(W)}^m(g\ot x\ot v)=\pi_m(x)g\ot x^m(x\ot v)=\pi_m(x)g\ot x\ot x^m.v
  \end{equation}
  and thus $\Tr(\theta^m_{\ol K(W)})=|G/H|\sum_{x,x^m\in H}\pi_m(x)\Tr(\rho_{W_{xH}}(x)^m)$.
\end{proof}

Let $h\in H$ and $w\in W_{xH}$. Then using \eqref{eq:7} we have, for $x\in G$ with $x^m\in H$:
\begin{align*}
  \pi_m(h\adhit x)&\rho_{W_{hxH}}((h\adhit x)^m)(h.w)\\
    &=\pi_m(h\adhit x)(h\adhit x)^m.h.w\\
    &=\pi_m(h\adhit x)\omega_x(h\adhit x^m,h)(h\adhit x^m)h.w\\
    &=\pi_m(h\adhit x)\omega_x(h\adhit x^m,h)hx^m.w\\
    &=\pi_m(h\adhit x)\omega_x(h\adhit x^m,h)\omega_x\inv(h,x^m)h.x^m.w\\
    &=\pi_m(h\adhit x)\coc_x(h\adhit x^m,h)\coc_x\inv(h,x^m)h.\rho_{W_{hxH}}(x^m)\\
    &=\pi_m(x)h.\rho_{W_{hxH}}(x^m),
\end{align*}
so that
\begin{equation}\label{eq:19}
  \pi_m(h\adhit x)\Tr(\rho_{W_{hxH}}(h\adhit x^m))=\pi_m(x)\Tr(\rho_{W_{xH}}(x^m)).
\end{equation}

\begin{Thm}\label{Thm:indic-form-adapt}
  Let $M\in\GTHC$ and $W=Q(M)\in\AGTHC$. Assume that the degrees of homogeneous elements of $M$ lie in the double coset $HgH$. Let $S=\Stab_H(gH)$, and let $\chi$ be the projective $\omega_g$-character of the projective $\omega_g$-Representation $W_{gH}$ of $S$. Let $\mathfrak R_g$ be a system of representatives for the orbits of the adjoint action of $S$ on $gH$. Then
  \begin{align}
    \label{eq:20}
    \nu_m(M)&=\frac 1{|S|}\sum_{\substack{r\in gH\\r^m\in S}}\pi_{-m}(r)\chi(r^{-m})\\
      \label{eq:21}&=\frac 1{|S|}\sum_{\substack{h\in H\\(gh)^m\in S}}\pi_{-m}(gh)\chi((gh)^{-m})\\
      \label{eq:22}&=\sum_{\substack{r\in\mathfrak R_g\\r^m\in S}}\frac 1{|S\cap C_G(r)|}\pi_{-m}(r)\chi(r^{-m}).
  \end{align}
\end{Thm}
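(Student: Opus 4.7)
I will start from the formula in the preceding proposition,
\[
\nu_m(M)=\frac1{|H|}\sum_{\substack{x\in G\\x^m\in H}}\pi_{-m}(x)\Tr(\rho_{W_{xH}}(x^{-m})),
\]
and carry out three reductions corresponding to the three equalities in the theorem.

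First, the hypothesis that the degrees of $M$ lie in $HgH$ means $W_{yH}=0$ unless $yH$ lies in the $H$-orbit of $gH$, i.e.\ unless $y\in HgH$. Thus only $x\in HgH$ contribute. I parameterize the cosets $xH\subset HgH$ via the bijection $H/S\to HgH/H$, $hS\mapsto hgH$, where $S=\Stab_H(gH)=H\cap gHg\inv$. For each $h\in H$ the adjoint action $r\mapsto h\adhit r=hrh\inv$ is a bijection $gH\to hgH$ (since $Hh\inv=H$), and $(h\adhit r)^m=h\adhit r^m$; in particular $x^m\in H$ iff $r^m\in H$ iff $r^m\in S$ (the latter because, for $r\in gH$, the element $r^m$ lies in $\Stab_H(gH)=S$ whenever it lies in $H$). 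Writing $x=h\adhit r$ with $r\in gH$ and $hS$ ranging over $H/S$, and invoking the key invariance \eqref{eq:19} (applied with exponent $-m$ and noting $W_{rH}=W_{gH}$ since $r\in gH$), each summand depends only on $r$, and the $hS$-sum yields a factor $|H|/|S|$. This produces \eqref{eq:20}.

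Next, \eqref{eq:21} is just the substitution $r=gh$ in \eqref{eq:20}, which is a bijection $H\to gH$.

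Finally, for \eqref{eq:22} I observe that $S$ acts on $\{r\in gH\mid r^m\in S\}$ by the adjoint action (since $s\in S$ stabilizes $gH$, so $s\adhit r\in gH$, and the condition $r^m\in S$ is preserved because $S$ is closed under conjugation by itself). By \eqref{eq:19} again (specialized to $h=s\in S\subset H$, using that $W_{s\adhit rH}=W_{rH}=W_{gH}$) the summand $\pi_{-m}(r)\chi(r^{-m})$ is constant on $S$-orbits. Orbit-counting with $|\text{orbit of }r|=|S|/|S\cap C_G(r)|$ then converts the sum over $r\in gH$ into a sum over orbit representatives $r\in\mathfrak R_g$ weighted by $1/|S\cap C_G(r)|$, giving \eqref{eq:22}.

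The only nontrivial input is the $H$-invariance encoded in \eqref{eq:19}, which is precisely what justifies reducing the global sum over $HgH$ to a sum over a single coset $gH$ (and then to orbit representatives); the rest is bookkeeping with coset and orbit counts. No new cohomological computation is needed beyond that already used to establish \eqref{eq:19}.
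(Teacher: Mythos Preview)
Your proof is correct and follows essentially the same approach as the paper's: start from formula \eqref{eq:15}, restrict to $x\in HgH$, write each such $x$ uniquely as $t\adhit r$ with $r\in gH$ and $t$ in a transversal of $H/S$, and use the invariance \eqref{eq:19} to collapse the sum, then apply orbit-counting for \eqref{eq:22}. You have in fact spelled out more detail than the paper (e.g.\ the verification that $r^m\in H$ forces $r^m\in S$, and that the $S$-action preserves the summation condition), but the logic is identical.
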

\begin{proof}
  In the sum \eqref{eq:15}, the only nonzero terms have $x\in HgH$ now. Let $\mathfrak T$ be a transversal of the right cosets of $S$ in $H$. Then every element $x\in HgH$ has the form $t\adhit r$ for unique $r\in gH$ and $t\in\mathfrak T$. By \eqref{eq:19} we obtain \eqref{eq:20} and \eqref{eq:21}. The third version \eqref{eq:22} follows since $S\cap C_G(r)$ is the stabilizer of $r$ under the adjoint action of $S$ on $gH$.
\end{proof}

\section{Indicator formulas for general cocycles}
\label{sec:indic-form-gener}

So far, our indicator formulas were derived, and worked, only for adapted cocycles. We will now write out the form that the formulas take for an arbitrary three-cocycle on $G$ which is not necessarily trvial, but only cohomologically so, on $H$.

We start by repeating a version of Ostrik's description \cite{MR1976233} of the simple objects in a group-theoretical fusion category in the general case. Thus, we are now dealing with a group $G$, subgroup $H$, and three-cocycle $\omega\colon G^3\to\CC^\times$, and with a normalized two-cochain $\psi\colon H^2\to\CC^\times$ such that $\omega|_{H^3}=d\psi$.

Let $M$ be an object of $_{\CC_\psi[H]}(\VecGom)_{\CC_\psi[H]}$ such that the degrees of its nonzero homogeneous elements are in the double coset $HgH$. Then $M$ can be equivalently described by a projective representation of $S=\Stab_H(gH)$ as follows: Let $M_g$ be the homogeneous component of degree $g$, and define a left action of $S$ on $M_g$ by
\begin{equation}
  \label{eq:23}
  s*m=(s.m).(g\inv\adhit s)
\end{equation}
Since
\begin{align*}
  s*t*m&=(s.((t.m).g\inv\adhit t)).g\inv\adhit s\\
    &=\omega(s,tg,g\inv\adhit t\inv)(s.t.m).g\inv\adhit t\inv.g\inv\adhit s\inv\\
    &=
    \begin{multlined}[t]
\omega(s,tg,g\inv\adhit t\inv)\omega(s,t,g)\psi(x,y)\\\omega\inv(stg,g\inv\adhit y\inv,g\inv\adhit x\inv)\psi(g\inv\adhit t\inv,g\inv\adhit s\inv)\\(st.m).d\inv\adhit(xy)\inv,
\end{multlined}
\end{align*}
we see that $M_g$ is a projective representation of $S$ with respect to the two-cocycle on $S$ given by
\begin{align}
  \label{eq:24}
  \beta_g(s,t)&
  \begin{multlined}[t]
=\psi(s,t)\psi(g\inv\adhit t\inv,g\inv\adhit s\inv)\omega(s,tg,g\inv\adhit t\inv)\\\omega(s,t,g)\omega\inv(stg,g\inv\adhit t\inv,g\inv\adhit s\inv).
\end{multlined}
\end{align}

If $\omega=d\eta$ and $\psi=\eta|_{H^2}$ for a two-cochain $\eta$ on $G$, then
\begin{align*}
  \beta_g(s,t)&=\begin{multlined}[t]\eta(s,t)\eta(g\inv\adhit t\inv,g\inv\adhit s\inv)\eta(tg,g\inv\adhit t\inv)\\\eta\inv(stg,g\inv\adhit t\inv)\eta(s,tg(g\inv\adhit t\inv))\eta\inv(s,tg)\eta(t,g)\\\eta\inv(st,g)\eta(s,tg)\eta\inv(s,t)\eta\inv(g\inv\adhit t\inv,g\inv\adhit s\inv)\\\eta(stg(g\inv\adhit t\inv),g\inv\adhit s\inv)\eta\inv(stg,(g\inv\adhit t\inv)(g\inv\adhit s\inv))\\\eta(stg,g\inv\adhit t\inv)
  \end{multlined}
  \\
  &=\begin{multlined}[t]\eta(tg,g\inv\adhit t\inv)\eta(s,g)\eta(t,g)\eta\inv(st,g)\\\eta(sg,g\inv\adhit s\inv)\eta\inv(stg,g\inv\adhit(st)\inv)
  \end{multlined}
  \\
  &=(d\lambda)(s,t)
\end{align*}
where $\lambda(s)=\eta(sg,g\inv\adhit s\inv)\eta(s,g)$.

Moreover, for $\omega=d\eta$, the scalars $\pi_m(x)$ simplify to $\pi_m(x)=\eta(x,x^m)\eta\inv(x^m,x)$: Clearly this is true for $m=0$; if it is true for $m$, then
\begin{align*}
  \pi_{m+1}(x)&=\pi_m(x)(d\eta)(x,x^m,x)\\
    &=\eta(x,x^m)\eta\inv(x^m,x)\eta(x^m,x)\eta\inv(x^{m+1},x)\eta(x,x^{m+1})\eta\inv(x,x^m)\\
    &=\eta(x,x^{m+1})\eta\inv(x^{m+1},x)\\
  \pi_{m-1}(x)&=\pi_m(x)(d\eta\inv)(x,x^{m-1},x)\\
    &=\eta(x,x^m)\eta\inv(x^m,x)\eta\inv(x^{m-1},x)\eta(x^m,x)\eta\inv(x,x^m)\eta(x,x^{m-1})\\
    &=\eta(x,x^{m-1})\eta\inv(x^{m-1},x).
\end{align*}

If $\omega=1$ and $\psi=d\theta$ for $\theta\colon H\to \CC^\times$ then we find $\beta_g(s,t)=d\gamma$ for $\gamma(s)=\theta(s)\theta(g\inv\adhit s\inv)$.

If $\omega'=\omega(d\eta)$ and $\psi'=\psi(\eta|_{H^2})(d\theta)$ for a two-cochain $\eta$ on $G$ and a one-cochain $\theta$ on $H$, then the group-theoretical categories associated to $(G,H,\omega,\psi)$ and $(G,H,\omega',\psi')$, respectively, are equivalent; the associated two-cocycles $\beta_g$ and $\beta'_g$ are related by $\beta_g'=\beta_g(d\lambda)(d\gamma)$, and the scalars $\pi_m(x)$ and $\pi_m'(x)$ by $\pi'_m(x)=\eta(x,x^m)\eta\inv(x^m,x)\pi_m(x)$. If $\chi$ is a projective character of $S=\Stab_H(gH)$ with cocycle $\beta_g$, then the associated projective character corresponding to $\beta_g'$ is $\chi'=\chi\lambda\gamma$.

\begin{Thm}\label{Thm:gencoc}
  Let $M$ be the simple object of the group-theoretical category $\C(G,H,\omega,\psi)$ associated to the $\beta_g$-projective character $\chi$ of the stabilizer $S=\Stab_H(gH)$. Let $\eta$ be a two-cochain on $G$ such that $\omega(d\eta)$ is adapted, and let $\theta$ be a one-cochain on $H$ such that $\psi(\eta|{H^2})(d\theta)=1$. Then the higher Frobenius-Schur indicators of $M$ are given by
  \begin{align}
    \label{eq:25}
    \nu_m(M)&=\frac 1{|S|}\sum_{\substack{r\in gH\\r^m\in S}}\tilde\pi_{-m}(r)\chi(r^{-m})\\
      \label{eq:26}&=\frac 1{|S|}\sum_{\substack{h\in H\\(gh)^m\in S}}\tilde\pi_{-m}(gh)\chi((gh)^{-m})\\
      \label{eq:27}&=\sum_{\substack{r\in\mathfrak R_g\\r^m\in S}}\frac 1{|S\cap C_G(r)|}\tilde\pi_{-m}(r)\chi(r^{-m})\\
      \intertext{where}
\tilde\pi_{m}(s)&=\begin{multlined}[t]\pi_m(s)\eta(s,s^{m})\eta\inv(s^{m},s)\\\eta(s^{m}g,g\inv\adhit s^{-m})\eta(s^{m},g)\theta(s^{m})\theta(g\inv\adhit s^{-m}).
\end{multlined}     
  \end{align}
\end{Thm}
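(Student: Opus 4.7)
The plan is to reduce Theorem~\ref{Thm:gencoc} to Theorem~\ref{Thm:indic-form-adapt} by applying the latter to the adapted datum $(G,H,\omega',1)$ with $\omega'=\omega(d\eta)$ and $\psi'=\psi(\eta|_{H^2})(d\theta)=1$, and then substituting the change-of-data formulas collected in the discussion preceding the statement.

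First, under the hypotheses on $\eta$ and $\theta$ we obtain a new quadruple of group-theoretical data $(G,H,\omega',1)$ with $\omega'$ adapted. By Natale's result (used here in its cochain-explicit form), the resulting group-theoretical fusion categories are equivalent, $\C(G,H,\omega,\psi)\simeq\C(G,H,\omega',1)$. Higher Frobenius--Schur indicators are categorical invariants, so the simple object $M$ corresponds to a simple object $M'$ of $\C(G,H,\omega',1)$ with $\nu_m(M)=\nu_m(M')$; moreover $M'$ has the same support $HgH$ and the same stabilizer $S=\Stab_H(gH)$.

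Second, I would identify the projective character of $M'$. Starting from the defining formula \eqref{eq:23} for the $S$-action on the isotypic component in degree $g$, and substituting $\omega\rightsquigarrow\omega(d\eta)$ and $\psi\rightsquigarrow\psi(\eta|_{H^2})(d\theta)$, one obtains at the level of the two-cocycle precisely the identity $\beta'_g=\beta_g(d\lambda)(d\gamma)$ derived in the preceding discussion, with $\lambda(s)=\eta(sg,g\inv\adhit s\inv)\eta(s,g)$ and $\gamma(s)=\theta(s)\theta(g\inv\adhit s\inv)$. The same bookkeeping, applied now to the trace of the $S$-action rather than to its obstruction cocycle, yields $\chi'=\chi\lambda\gamma$. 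Since $\omega'$ is adapted and $\psi'=1$, the cocycle $\beta'_g$ reduces to $\omega'_g|_{S\times S}$, so $\chi'$ is exactly of the type required by Theorem~\ref{Thm:indic-form-adapt}.

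Third, I would apply Theorem~\ref{Thm:indic-form-adapt} to $M'$ to obtain the three forms \eqref{eq:20}--\eqref{eq:22} for $\nu_m(M')$ with $\pi'_{-m}$ and $\chi'$ in place of $\pi_{-m}$ and $\chi$. Substituting the two identities recorded in the preceding discussion,
\[
\pi'_m(x)=\eta(x,x^m)\eta\inv(x^m,x)\pi_m(x),\qquad \chi'(r^{-m})=\chi(r^{-m})\lambda(r^{-m})\gamma(r^{-m}),
\]
and collecting all the resulting $\eta$- and $\theta$-factors into $\tilde\pi_{-m}(r)$ as defined in the theorem yields \eqref{eq:25}--\eqref{eq:27}. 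The only step that is not pure substitution is the transport $\chi\mapsto\chi\lambda\gamma$ of the projective character along the equivalence; but this is one level up from the cocycle calculation already performed in the discussion, and I expect no new difficulty beyond careful signs and arguments.
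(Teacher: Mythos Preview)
Your proposal is correct and follows precisely the paper's own approach: the paper's proof is a single sentence stating that the result follows by applying Theorem~\ref{Thm:indic-form-adapt} to the adapted cocycle $\omega(d\eta)$ in view of the calculations preceding the statement, and you have simply spelled out those substitutions (the identities $\beta'_g=\beta_g(d\lambda)(d\gamma)$, $\chi'=\chi\lambda\gamma$, and $\pi'_m(x)=\eta(x,x^m)\eta^{-1}(x^m,x)\pi_m(x)$) in more detail. The only mild caveat is your closing remark that the transport $\chi\mapsto\chi\lambda\gamma$ might require further checking: in fact the paper already records this as part of the preceding discussion, so no additional work is needed there.
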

\begin{proof}
  This results from applying \cref{Thm:indic-form-adapt} to the adapted cocycle $\omega(d\eta)$, in view of the above calculations.
\end{proof}

Recall (see \cite{MR1002038}) that the third cohomology group of the cyclic group $\mathbb Z/N\mathbb Z$ of order $N$ is generated by the cocycle $\underline\kappa$ defined by
\begin{equation}
\label{eq:28}  \underline\kappa(\ol j,\ol k,\ol\ell)=\exp\left(\displaystyle\frac{2\pi i}{N^2}[\ell]\left([j]+[k]-[j+k]\right)\right),
\end{equation}
where $[k]\equiv k(N)$ and $[k]\in\{0,\dots,N-1\}$. We note for later use that for $\omega=\underline\kappa$ we obtain $\alpha_{\ol\ell}(\ol j,\ol k)=\underline\kappa(\ol j,\ol k,\ol\ell)=(d\underline\lambda_{\ol k})(\ol j,\ol k)$, where $\lambda_{\ol k}(\ol j)=\exp\left(\dfrac{2\pi i}{N^2}[k][j]\right)$.

\begin{Prop}
  Let $(G,H,\omega,\psi)$ be group-theoretical data. Let $\omega'=\omega\kappa^t$, where $p\colon G\to \mathbb Z/N\mathbb Z$ is a group homomorphism with $H\subset\Ker(p)$, and $\kappa$ is inflated from $\underline\kappa$ along $p$. 

  Then simple objects of $\C(G,H,\omega',\psi)$ (associated to the double coset $HgH$) are in one-to-one correspondence with simple objects of $\C(G,H,\omega,\psi)$ (associated to the same double coset).

  Let $\zeta=\exp\left(\displaystyle\frac{2\pi i}{N}\right)$.

  If the simple object $M'$ of $\C(G,H,\omega',\psi)$ corresponds to $M\in\C(G,H,\omega,\psi)$ which is associated to the double coset $HgH$ with $p(g)=\ol k$, then
  \begin{equation}
    \label{eq:29}
    \nu_{m}(M')=\underline\kappa^{ts}(p(g),p(g\inv),p(g))\nu_{m}(M)=\zeta^{-stk^2/\gcd(k,N)}\nu_{m}(M)
  \end{equation}
  if $m=s\ord(p(g))=sN/\gcd(k,N)$,
  and $\nu_m(M)=\nu_m(M')=0$ if $\ord(p(g))=N/\gcd(k,N)$ does not divide $m$.
\end{Prop}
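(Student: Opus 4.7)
The strategy is to apply \cref{Thm:gencoc} separately to $(G,H,\omega,\psi)$ and to $(G,H,\omega',\psi)$ with the same auxiliary cochains $\eta$, $\theta$ and the same projective character~$\chi$, and then to compare the two indicator formulas factor by factor. The key observation is that $\kappa=p^{*}\underline\kappa$ evaluates to $1$ as soon as one of its three arguments lies in $\Ker p$: reading \eqref{eq:28} directly, either $[\ell]=0$ kills the prefactor, or $[j]=0$ or $[k]=0$ makes $[j]+[k]-[j+k]=0$. Since $H\subset\Ker p$ and $\Ker p$ is normal in~$G$, every conjugate $g\inv\adhit h$ with $h\in H$ also lies in $\Ker p$. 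Consequently $\kappa$ is adapted, $\kappa|_{H^{3}}=1$, and the same $\eta$ making $\omega(d\eta)$ adapted makes $\omega'(d\eta)=\omega(d\eta)\kappa^{t}$ adapted; the condition $\psi(\eta|_{H^{2}})(d\theta)=1$ is unchanged.

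I would then verify that the cocycle $\beta_{g}$ of \eqref{eq:24} is invariant under the replacement of $\omega$ by $\omega'$. The $\kappa^{t}$-contribution to $\beta'_{g}/\beta_{g}$ factors as $\kappa^{t}(s,tg,g\inv\adhit t\inv)\,\kappa^{t}(s,t,g)\,\kappa^{-t}(stg,g\inv\adhit t\inv,g\inv\adhit s\inv)$, and in each of the three factors at least one argument lies in $\Ker p$ (for instance $s$, $t$, and $g\inv\adhit s\inv$ respectively), so each equals $1$ by the observation above. Hence $\beta'_{g}=\beta_{g}$, the $\beta_{g}$-projective characters of $S=\Stab_{H}(gH)$ parametrise simples of both $\C(G,H,\omega,\psi)$ and $\C(G,H,\omega',\psi)$ above the double coset~$HgH$ (this being the asserted bijection), and $M$ and $M'$ share the same~$\chi$.

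With $\eta$, $\theta$, $\chi$ chosen identically for the two data sets, the indicator formulas produced by \cref{Thm:gencoc} agree up to the scalar $\tilde\pi_{-m}$. The defining recursion $\pi_{m+1}(x)=\omega(x,x^{m},x)\pi_{m}(x)$ is multiplicative in~$\omega$, so $\pi^{\omega\kappa^{t}}_{m}=\pi^{\omega}_{m}\pi^{\kappa^{t}}_{m}$, while the $\eta$- and $\theta$-factors in the definition of $\tilde\pi_{m}$ are unchanged; therefore $\tilde\pi^{\omega'}_{-m}(r)=\pi^{\kappa^{t}}_{-m}(r)\,\tilde\pi^{\omega}_{-m}(r)$. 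Because $\kappa$ is inflated along~$p$, the factor $\pi^{\kappa^{t}}_{-m}(r)$ depends only on $p(r)$, which for $r\in gH$ is $p(g)=\bar k$; so it is constant on the summation range of \eqref{eq:25} and pulls out:
\[
  \nu_{m}(M')=\pi^{\underline\kappa^{t}}_{-m}(\bar k)\,\nu_{m}(M).
\]

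It remains to compute $\pi^{\underline\kappa^{t}}_{-m}(\bar k)$ explicitly. Using that $\alpha_{\bar k}$ is a coboundary (as noted just before the proposition), a routine induction in the defining recursion yields the closed form $\pi^{\underline\kappa}_{m}(\bar k)=\lambda_{\bar k}(\bar k)^{m}/\lambda_{\bar k}(m\bar k)$ for every $m\in\Z$, with $\lambda_{\bar k}(\bar j)=\exp(\frac{2\pi i}{N^{2}}[k][j])$. If $\ord(\bar k)=N/\gcd(k,N)$ does not divide~$m$, then $p(r^{m})=m\bar k\neq 0$ for every $r\in gH$, so no $r$ satisfies $r^{m}\in S\subset\Ker p$ and both $\nu_{m}(M)$ and $\nu_{m}(M')$ vanish for lack of any summand. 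When $m=sN/\gcd(k,N)$ we have $m\bar k=0$, hence $\lambda_{\bar k}(m\bar k)=1$, and
\[
  \pi^{\underline\kappa^{t}}_{-m}(\bar k)=\lambda_{\bar k}(\bar k)^{-mt}=\exp\!\left(-\frac{2\pi i\,stk^{2}}{N\gcd(k,N)}\right)=\zeta^{-stk^{2}/\gcd(k,N)};
\]
direct substitution into \eqref{eq:28} shows this also equals $\underline\kappa^{ts}(p(g),p(g\inv),p(g))$. I expect the main technical obstacle to be the invariance check $\beta'_{g}=\beta_{g}$, since each of the three $\kappa$-factors requires a separate inspection of where its arguments sit relative to $\Ker p$.
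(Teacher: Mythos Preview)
Your proof is correct and follows essentially the same approach as the paper: both reduce to $t=1$, observe that $\kappa$ vanishes whenever an argument lies in $\Ker p$ (hence $\beta'_g=\beta_g$ and the same $\eta,\theta,\chi$ work), and then isolate the constant factor $\pi^{\underline\kappa}_{-m}(\bar k)$ in the indicator formula. The only difference is cosmetic: you evaluate $\pi^{\underline\kappa}_{m}(\bar k)$ via the closed form $\lambda_{\bar k}(\bar k)^{m}/\lambda_{\bar k}(m\bar k)$ coming from $\alpha_{\bar k}=d\lambda_{\bar k}$, whereas the paper computes the product $\prod_{r=1}^{e}\underline\kappa(\bar k,(a+r)\bar k,\bar k)$ directly by telescoping.
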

\begin{proof}
  It suffices to treat the case $t=1$. More than adapted, $\kappa$ vanishes whenever one of its arguments is in $H$ (or the kernel of $p$, for that matter). This implies that the analog of $\beta_g$ associated to $\kappa$ would be trivial, and thus that $\beta'_g=\beta_g$.

  Let $M$ and $M'$ as above correspond to the $\beta_g$-character $\chi$ of $S=\Stab_H(gH)$. If $g\in\Ker(p)$, then obviously nothing changes between the indicator formulas for $M$ and $M'$. Let $g\not\in\Ker(p)$ and $e=\ord(p(g))$. If $e\nmid m$, then for all $x\in H$ we have $p((gx)^m)=p(g)^mp(x)^m\not=p(x^m)$, and thus in particular $(gx)^m\not=x^m$. Thus $\nu_m(M)=\nu_m(M')=0$. Now let $m=es$. Let $\underline\pi_m$ be the analog of $\pi_m$ defined for the three-cocycle $\kappa$. Since $p(gx)=p(g)$ and $p((gx)^k)=p(g)^k$, we have $\underline\pi_m(gx)=\underline\pi_m(g)$ independent of $x$. Now let $p(g)=\ol k$, so $e=N/\gcd(k,N)$. For any $a\in\Z$ we have
  \begin{align*}
    \prod_{r=1}^e\underline\kappa(\overline k,(a+r)\overline k,\overline k)&=\exp\frac{2\pi i}{N^2}\left(k\sum_{r=1}^e(k+[(a+r)k]-[(a+r-1)k])\right)\\
    &=\exp\frac{2\pi i}{N^2}k\left(ek+[(a+e)k]-[ak]\right)\\
    &=\exp\frac{2\pi i}{N^2}ek^2\\
    &=\exp\frac{2\pi i}N\frac{k^2}{\gcd(k,N)}.
  \end{align*}
  Thus, if we denote by $\tilde\pi_m'$ the analog of $\tilde\pi_m$ defined for $\omega'$ instead of $\omega$, then when $m=se$, then
  $\tilde\pi'_m(g)=\tilde\pi_m(g)\zeta^{sk^2/\gcd(k,N)}$. Finally note that $\underline\kappa(p(g),p(g)\inv,p(g))=\zeta^k$.
\end{proof}

Thanks to the correction terms for ``bad'' cocycles amassed in the definition of $\tilde\pi_m$, the formulas in \cref{Thm:gencoc} may be tedious to apply. At first sight, this seems to be particularly true since passing from a cocycle $\omega$ whose restriction to $H$ is trivial to an adapted cocycle as indicated by Natale is a two-step process with quite an involved definition of a two cochain $\eta$. If we revisit Natale's formulas, however, it turns out that we get fairly lucky:

Assume that the three-cocycle $\omega$ satisfies $\omega|_{H^3}=1$. Choose a cross section $Q$ of the right $H$-cosets in $G$. Then, following \cite[Prop.4.2]{MR2196640} with switched sides, $\eta_1(ph,qh')=\omega(p,h,h')$ defines a cochain on $G$ such that $\eta_1|_{H\times G}=1$, and $\omega_0|_{G\times H\times H}=1$ for $\omega_0=\omega(d\eta_1)$. In the next step, define $\eta_2(ph,qh')=\omega_0(ph,q,h')\omega_0\inv(p,h,q)$. Then $\eta_2|_{G\times H}=1$ and $\eta_2|_{H\times Q}=1$, and $\omega_0(d\eta_2)=\omega(d\eta)$ for $\eta=\eta_1\eta_2$ is adapted. (The next step taken by Natale is not necessary for our purposes.)
Computing indicator values, we can of course assume that our double coset representatives are in $Q$. Thus if $s^m\in\Stab_H(dH)$ then 
\begin{align*}\tilde\pi_m(s)&=\pi_m(s)\eta_1(s^md,d\inv\adhit s^{-m})\\
  &=\pi_m(s)\eta_1(d(d\inv\adhit s^m),d\inv\adhit s^{-m})\\
  &=\pi_m(s)\omega(d,d\inv\adhit s^m,d\inv\adhit s^{-m}).
\end{align*}

Thus
\begin{Cor}\label{cor:indic-triv-res}
  Let $\omega$ be a three-cocycle on $G$ whose restriction to $H$ is trivial. Let $M$ be the simple object of the group-theoretical category $\C(G,H,\omega,1)$ associated to the $\beta_g$-projective character $\chi$ of the stabilizer $S=\Stab_H(gH)$. Then
  \begin{equation}
    \label{eq:30}
    \nu_m(M)=\frac1{|S|}\sum_{\substack{r\in gH\\r^m\in S}}\pi_{-m}(r)\omega(g,g\inv\adhit r^{-m},g\inv\adhit r^m)\chi(r^{-m}).
  \end{equation}
\end{Cor}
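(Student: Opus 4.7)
My plan is to apply \cref{Thm:gencoc} using the explicit two-step cochain $\eta=\eta_1\eta_2$ from Natale's construction as described in the paragraphs immediately preceding this corollary, together with the simplest possible choice $\theta=1$.

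First, I verify that $\theta=1$ is admissible. Since $\psi=1$, the defining condition $\psi(\eta|_{H^2})(d\theta)=1$ reduces to $\eta|_{H^2}=1$. This follows by direct inspection: for $h_1,h_2\in H$, the factorization through the cross section $Q$ (taking $1\in Q$) writes $h_i=1\cdot h_i$, so $\eta_1(h_1,h_2)=\omega(1,h_1,h_2)=1$ and $\eta_2(h_1,h_2)=\omega_0(h_1,1,h_2)\omega_0\inv(1,h_1,1)=1$ by normalization of $\omega$ and $\omega_0$. Consequently the two $\theta$-factors in the definition of $\tilde\pi_m$ disappear.

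Second, I invoke the computation of $\tilde\pi_m(s)$ carried out above for double-coset representatives $g\in Q$ (which we may arrange, as indicator values depend only on the double coset). That computation collapses the four remaining $\eta$-factors in $\tilde\pi_m(s)$ to a single surviving term $\eta_1(s^mg,g\inv\adhit s^{-m})$, by systematic use of the vanishings $\eta_1|_{H\times G}=1$, $\eta_2|_{G\times H}=1$, $\eta_2|_{H\times Q}=1$ and normalization. Writing $s^m g=g(g\inv\adhit s^m)$ with $g\in Q$, the definition of $\eta_1$ identifies the surviving factor as $\omega(g,g\inv\adhit s^m,g\inv\adhit s^{-m})$, so $\tilde\pi_m(s)=\pi_m(s)\omega(g,g\inv\adhit s^m,g\inv\adhit s^{-m})$.

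Replacing $m$ by $-m$ and substituting into \eqref{eq:25} of \cref{Thm:gencoc} yields exactly \eqref{eq:30}. The main (though not conceptually difficult) obstacle is the cancellation bookkeeping in the second step; the rest of the argument is essentially a substitution.
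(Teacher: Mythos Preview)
Your proof is correct and follows the same approach as the paper: both apply \cref{Thm:gencoc} with Natale's explicit cochain $\eta=\eta_1\eta_2$ and $\theta=1$, then invoke the simplification of $\tilde\pi_m(s)$ worked out in the paragraphs preceding the corollary. Your proposal is slightly more explicit than the paper in justifying why $\theta=1$ is admissible (the paper leaves $\eta|_{H^2}=1$ implicit, though it follows immediately from the stated vanishings $\eta_1|_{H\times G}=1$ and $\eta_2|_{G\times H}=1$), but otherwise the arguments coincide.
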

\section{Indicators of twisted doubles}
\label{sec:indic-twist-doubl}

\newcommand\G{G}
\newcommand\w{\omega}
\newcommand\GG{\Gamma}
\newcommand\ww{\varpi}
\newcommand\HH{H}

The module category of the twisted double of a finite group is a particular example of a group-theoretical fusion category. Let $\G$ be a finite group, and $\w$ a three-cocycle on $\G$. Let $\GG=\G\times\G$ and $\ww$ the three-cocycle on $\GG$ defined by $\ww((x,f),(y,g),(z,h))=\w(x,y,z)\w\inv(f,g,h)$. Put $\HH=\{(x,x)|x\in \G\}$; then $\ww|_{\HH^3}=1$. Ostrik observed \cite{MR1976233} that the module category of the twisted double $D^\w(\G)$ is equivalent to $\C(\GG,\HH,\ww,1)$. We'll identify $\HH$ with $\G$ in the sequel.

We will give an expression for the indicators of $D^\w(\G)$-modules using the formulas for indicators of objects in group-theoretical categories, in particular $\C(\GG,\HH,\ww,1)$. It remains to bridge the last gap between the two categories, however, by making the equivalence very explicit.

First, note that an object in $_{\CC[\G]\otb}(\Vect_{\G})$ decomposes as the direct sum of subobjects the degrees of whose homogeneous elements lie in a conjugacy class of $\G$. An object the degree of whose homogeneous elements lies in the conjugacy class of $g$ can equivalently be described by the action of the centralizer $C_{\G}(g)$ on its $g$-homogeneous component. This in turn is an $\coc_g$-projective representation of $C_{\G}(g)$.

A set of representatives of the double cosets of $\HH$ in $\GG$ is given by $(g,1)$ where $g$ runs through a set of representatives of the conjugacy classes of $\G$. The stabilizer of $(g,1)\HH$ is the centralizer $C_\G(g)$. Thus simple objects in $\C(\GG,\HH,\ww,1)$ are described by $\beta_{(g,1)}$-projective representations of $C_\G(g)$, where the two-cocycle $\beta_{(g,1)}$ is defined in terms of the three-cocycle $\ww$.

To explicitly pass between the two types of projective representations we use the structure theorem for Hopf modules over quasi-Hopf algebras \cite{Sch:HMDQHA}, which asserts that we have a monoidal category equivalence
\begin{equation}
  \label{eq:31}
  _{\CC[G]\otb}(\Vect_{\CC[\G]})\ni V\mapsto V\ot\CC[\G]\in{_{\CC[G]}(\Vect_{\G\times\G}^\ww)_{\CC[G]}},
\end{equation}
where $V\ot\CC[\G]$ has the $\GG$-grading $|v\ot g|=(|v|g,g)$, the right action $(v\ot g)x=\w\inv(|v|,g,x)v\ot gx$ and the left action $x(v\ot g)=\w(x,|v|,g)\w(x\adhit|v|,x,g)x.v\ot xg$. With this, the left action of $C_\G(g)$ on $(V\ot\CC[G])_{(g,1)}$ is given by
\[x*(v\ot 1)=(x.(v\ot 1)).x\inv=\w\inv(g,x,x\inv)x.v\ot 1.\]

In particular, the $\beta_{(g,1)}$-character corresponding to the $\coc_g$-character $\chi$ is given by $\chi\lambda_g$ with $\lambda_g(x)=\w\inv(g,x,x\inv)$, and $\beta_{(g,1)}$ and $\coc_g$ differ by the boundary of $\lambda_g$.

\begin{Thm}
  Let $\G$ be a finite group, and $\w$ a three-cocycle on $\G$. Let the simple object $M\in{_{\CC[G]\otb}(\Vect_{\G})}$ correspond to the $\coc_g$-character $\chi$ of $C_\G(g)$. Then
    \begin{equation}
      \label{eq:32}
      \nu_m(M)=\frac 1{|C_\G(g)|}\sum_{\substack{x\in G\\(gx)^m=x^m}}\frac{\pi_{m}(gx)}{\pi_{m}(x)}\chi(x^{m}).
    \end{equation}
\end{Thm}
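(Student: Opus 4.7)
The plan is to apply \cref{cor:indic-triv-res} to the group-theoretical category $\C(\GG,\HH,\ww,1)$, using the explicit identification between simple $D^\w(\G)$-modules and simple objects of $\C(\GG,\HH,\ww,1)$ spelled out in the paragraphs just before the theorem statement. I take $d=(g,1)$ as a double-coset representative, identify $\Stab_\HH(d\HH)$ with $C_\G(g)$, and translate the given $\coc_g$-character $\chi$ of $C_\G(g)$ into the $\beta_d$-character $\chi\lambda_g$ with $\lambda_g(x)=\w\inv(g,x,x\inv)$. The applicability of \cref{cor:indic-triv-res} to $\ww$ is guaranteed because $\ww|_{\HH^3}=1$ on the diagonal, even though $\ww$ need not be adapted on $\GG$.

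First I would parametrize $r\in d\HH$ by $x\in\G$ via $r=d(x,x)=(gx,x)$, noting that $r^m=((gx)^m,x^m)$ lies in the diagonal stabilizer $S$ iff $(gx)^m=x^m$; a one-line induction (comparing the two factorizations of $(gx)^{m+1}$) shows this equality already forces $x^m\in C_\G(g)$, so that $d\inv\adhit r^{\pm m}=(x^{\pm m},x^{\pm m})$ is again diagonal. Next I would evaluate the three ingredients of \eqref{eq:30}: because $\ww$ is a product cocycle and $\pi$ is built recursively from $\omega$, one has $\pi^\ww_{-m}(r)=\pi_{-m}(gx)\pi_{-m}(x)\inv$; the $\ww$-correction $\ww(d,d\inv\adhit r^{-m},d\inv\adhit r^m)$ simplifies to $\w(g,x^{-m},x^m)$, since the second-coordinate contribution $\w\inv(1,x^{-m},x^m)$ is trivial by normalization; and $(\chi\lambda_g)(r^{-m})=\chi(x^{-m})\w\inv(g,x^{-m},x^m)$ by definition of $\lambda_g$. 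The two $\w(g,x^{-m},x^m)$ factors cancel, delivering the intermediate formula
\begin{equation*}
\nu_m(M)=\frac1{|C_\G(g)|}\sum_{\substack{x\in\G\\(gx)^m=x^m}}\frac{\pi_{-m}(gx)}{\pi_{-m}(x)}\chi(x^{-m}).
\end{equation*}

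The main obstacle is the last step: passing from this intermediate formula to the target shape \eqref{eq:32} featuring $\pi_m$ and $\chi(x^m)$. I would tackle this by exhibiting an involution of the index set $\{x\in\G:(gx)^m=x^m\}$ that sends $x^m$ to $x^{-m}$ while being compatible with the identifications above; the natural candidate arises from reparametrizing by $y=gx$ and inverting, possibly after a conjugation. Combining this involution with the recursive identity $\pi_{m+1}(y)=\w(y,y^m,y)\pi_m(y)$ (which relates $\pi_{-m}$ to $\pi_m\inv$ at $gx$ and at $x$), together with the twisted cocycle identity \eqref{eq:3} for $\coc_g$ and repeated use of $x^m\in C_\G(g)$ to trivialise stray conjugations, should force the remaining $\w$-factors to telescope into the clean ratio $\pi_m(gx)/\pi_m(x)$ claimed in the theorem. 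The cocycle bookkeeping in this final step is where I expect the main technical effort to lie.
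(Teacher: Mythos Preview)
Your derivation of the intermediate formula
\[
\nu_m(M)=\frac1{|C_\G(g)|}\sum_{\substack{x\in\G\\(gx)^m=x^m}}\frac{\pi_{-m}(gx)}{\pi_{-m}(x)}\,\chi(x^{-m})
\]
is correct and agrees with the paper exactly: the same application of \cref{cor:indic-triv-res}, the same parametrisation $r=(gx,x)$, the same observation that $(gx)^m=x^m$ forces $x^m\in C_\G(g)$ via $(gx)^{m+1}=gx\cdot(gx)^m=gx^{m+1}$ versus $(gx)^{m+1}=(gx)^m\cdot gx=x^mgx$, and the same cancellation of $\ww(d,d\inv\adhit r^{-m},d\inv\adhit r^m)=\w(g,x^{-m},x^m)$ against $\lambda_g(x^{-m})=\w\inv(g,x^{-m},x^m)$.

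Where you diverge from the paper is in the final step, and here you are working much harder than necessary. The paper does not perform any involution on the index set, nor any further cocycle bookkeeping. It simply invokes two general facts about higher Frobenius--Schur indicators: one always has $\nu_{-m}(V)=\ol{\nu_m(V)}$, and in a \emph{braided} pivotal category all indicators are real. Since ${_{\CC[G]\otb}(\Vect_\G)}\cong\RCTR(\VecGom)$ is braided, this gives $\nu_m(M)=\nu_{-m}(M)$. Now apply \cref{cor:indic-triv-res} at degree $-m$ rather than $m$: the summation condition $(gx)^{-m}=x^{-m}$ is equivalent to $(gx)^m=x^m$, and the summand becomes $\dfrac{\pi_{m}(gx)}{\pi_{m}(x)}\,\chi(x^{m})$ directly. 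This is the entire argument.

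Your proposed route---an involution sending $x^m$ to $x^{-m}$ followed by telescoping $\pi_{-m}$ against $\pi_m$ through \eqref{eq:3}---might in principle be pushed through, but you have not actually carried it out, and the candidate involutions you hint at (e.g.\ $x\mapsto x\inv g\inv$) are not genuine involutions on the index set without further conjugation. More to the point, any such argument would in effect be reproving, term by term, the realness of the indicator, which is already available as a structural fact about braided categories. Use that instead.
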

\begin{proof}
  We apply \cref{cor:indic-triv-res} to the group $\GG$ and the three-cocycle $\ww$, whose restriction to $\G$ is in fact trivial.
  
  We note that in general $\nu_{-m}(V)=\ol{\nu_m(V)}$ and in a braided category indicators are real, which allows us to forget the minus signs on the degree $m$ of the indicators. The analog of $\pi_m((g,1)(x,x))$ for $\ww$ is $\pi_m(gx)/\pi_m(x)$, and the $\w$-factor in \cref{cor:indic-triv-res} disappears thanks to the passage from $\alpha_g$-projective characters and $\beta_{(g,1)}$-projective characters discussed before the statement of the theorem.
\end{proof}
\begin{Prop}
  Let $\G$ be a finite group, and $\w\colon \G^3\to \CC^\times$ a three-cocycle. Let $N\leq G$ be a subgroup of index two, and let $\w'$ be the product of $\w$ and the inflation of the nontrivial three-cocycle on $\G/N$. Then there is a one-to-one correspondence between objects of $\RCTR(\VecGom)$ and $\RCTR(\Vect_{\G}^{\w'})$. If $M\in\RCTR(\VecGom)$ corresponds to $M'\in\RCTR(\Vect_{\G}^{\w'})$, both are associated to the same conjugacy class $g^\G$. We have $\nu_m(M')=\nu_m(M)$ unless $g\not\in N$ and $m\equiv 2(4)$, in which case $\nu_m(M')=-\nu_m(M)$.
\end{Prop}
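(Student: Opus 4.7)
The plan is to apply the indicator formula of the preceding theorem (for twisted doubles) to both $\omega$ and $\omega'=\omega\kappa$---where $\kappa$ denotes the inflation along $p\colon G\to G/N$ of the nontrivial three-cocycle $\underline\kappa$ on $\Z/2$---and compare them term by term. The computation uses two ingredients: the bijection of simples, and the behaviour of the scalars $\pi_m$.

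First, since $\underline\kappa(\bar j,\bar k,\bar\ell)=-1$ iff $[j]=[k]=[\ell]=1$, and since $p(y\adhit g)=p(g)$ (because $G/N$ is abelian), a direct computation of the $2$-cocycle $\alpha^{(\kappa)}_g$ attached to $\kappa$ via the definition of the $\alpha_g$'s collapses to $\alpha'_g(x,y)/\alpha_g(x,y)=\underline\kappa(p(g),p(x),p(y))$. For $g\in N$ the right-hand side is trivial, giving $\alpha_g=\alpha'_g$. For $g\not\in N$ it equals $(-1)^{p(x)p(y)}$, which on $C_G(g)$ is the coboundary $d\underline\mu$ of $\underline\mu(y):=i^{[p(y)]}$. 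Thus $\chi\mapsto\chi'=\chi\underline\mu$ identifies $\alpha_g$- and $\alpha'_g$-projective characters of $C_G(g)$, yielding the one-to-one correspondence of simples attached to a fixed conjugacy class. Analogously, setting $\underline\pi_m(x):=\pi'_m(x)/\pi_m(x)$, the $\pi_m$-scalar for $\kappa$ alone, a short recursion from the definition of $\pi_m$ shows that $\underline\pi_m(x)$ depends only on $p(x)$ and $m\bmod 4$, with $\underline\pi_m(x)=-1$ exactly when $p(x)=1$ and $m\equiv 2,3\pmod 4$.

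Plugging these into the preceding indicator formula gives
\begin{equation*}
\nu_m(M')=\frac{1}{|C_G(g)|}\sum_{\substack{x\in G\\(gx)^m=x^m}}\frac{\pi_m(gx)}{\pi_m(x)}\cdot\frac{\underline\pi_m(gx)}{\underline\pi_m(x)}\cdot\chi(x^m)\cdot\underline\mu(x^m),
\end{equation*}
where the $\underline\mu$-factor appears only for $g\not\in N$. If $g\in N$, then $p(gx)=p(x)$ makes the $\underline\pi$-ratio equal $1$, so $\nu_m(M')=\nu_m(M)$. If $g\not\in N$, the support condition $(gx)^m=x^m$ forces $p(g)^m=0$ in $\Z/2$, hence $m$ is even; then $p(x^m)=0$ and $\underline\mu(x^m)=1$, while $p(gx)\neq p(x)$ makes the $\underline\pi$-ratio equal $+1$ for $m\equiv 0\pmod 4$ and $-1$ for $m\equiv 2\pmod 4$. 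The main obstacle is the initial cohomological bookkeeping---computing $\alpha'_g/\alpha_g$, identifying the explicit coboundary $\underline\mu$, and checking that $\underline\mu(x^m)$ becomes trivial under the support condition whenever $g\not\in N$; once this is done, only the parity analysis just outlined remains.
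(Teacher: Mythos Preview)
Your argument is correct and follows essentially the same route as the paper's proof: both compare the formulas for $\nu_m$ term by term, compute $\alpha'_g/\alpha_g$ (trivial for $g\in N$, the coboundary of the $i$-valued cochain for $g\notin N$), compute the $\kappa$-contribution $\underline\pi_m(gx)/\underline\pi_m(x)$ by counting the odd exponents among $0,\dots,m-1$, use the support condition $(gx)^m=x^m$ to force $m$ even when $g\notin N$ (whence the $\underline\mu(x^m)$ factor is trivial), and finish with the mod-$4$ parity check. Your presentation is slightly more streamlined in writing $\alpha'_g/\alpha_g=\underline\kappa(p(g),p(x),p(y))$ directly (the three $\underline\kappa$-factors in the definition of $\alpha_g$ collapse by symmetry of $\underline\kappa$ on $\Z/2$), but the content is identical.
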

\begin{proof}Denote the inflated cocycle by $\kappa$.
  If $g\in N$ and $x\in G$ then $\kappa(gx,(gx)^m,gx)=\kappa(x,x^m,x)$, so that $\pi_m(gx)/\pi_m(x)=\pi_m'(gx)/\pi'_m(x)$, independent of $m$. Also, $\alpha'_g(x,y)=\alpha_g(x,y)$ for $x,y\in C_G(g)$, so that $\alpha_g'$-representations and $\alpha_g$-representations of $C_G(g)$ are the same.

  Now let $g\not\in N$. Then if $m$ is even, $\kappa(gx,(gx)^m,gx)=1=\kappa(x,x^m,x)$. If $m$ is odd, then $\kappa(gx,(gx)^m,gx)=-\kappa(x,x^m,x)=\pm 1$. Thus
  \begin{equation*}
    \pi_m'(gx)/\pi'_m(x)=
    \begin{cases}
      \pi_m(gx)/\pi_m(x)&\text{ if }m\equiv 0(4)\text{ or }m\equiv 1(4), \\
      -\pi_m(gx)/\pi_m(x)&\text{ if }m\equiv 2(4)\text{ or }m\equiv 3(4).
    \end{cases}
  \end{equation*}
  Furthermore, $\alpha_g'$ and $\alpha_g$ agree on $C_G(g)\times C_G(g)$ up to the coboundary of $\lambda_g$ defined by $\lambda_g(x)=i$ if $x\not\in N$ and $\lambda_g(x)=1$ if $x\in N$. Thus $\alpha_g$-characters $\chi$ and $\alpha_g'$-characters $\chi'$ of $C_G(g)$ are in one-to-one correspondence via $\chi'=\lambda_g\chi$. Since $g\not\in N$, the sum \eqref{eq:32} is empty unless $m$ is even, in which case $\lambda_g(x^{-m})=1$. 
\end{proof}

\begin{Rem}
 \cite{MR2333187} contains a list of the higher Frobenius-Schur indicators for the twisted doubles of groups of order eight. One can see in these tables that replacing the three-cocycle by its product with a cocycle inflated from a factor group of order two does indeed change the sign of the indicators of degree congruent to two modulo four, and this for eight of the simple objects; indeed of the five congruence classes in the groups considered, two are not in the respective subgroup of index two, and thus the four representations associated to each of the two classes are affected.
\end{Rem}

\printbibliography

\end{document}